\RequirePackage{fix-cm}
\documentclass[smallextended]{svjour3}       
\smartqed  

\usepackage{graphicx}
\usepackage{amssymb}
\usepackage[active]{srcltx}
\usepackage{a4wide}
\usepackage{amsmath}
\usepackage{amssymb}
\usepackage{amstext}
\usepackage[normalem]{ulem}
\usepackage{cite}
\usepackage{epsfig}
\usepackage{enumerate}
\usepackage{color}
\usepackage{amssymb}
\allowdisplaybreaks
\usepackage{float}
\usepackage[linkcolor=blue, urlcolor=blue, citecolor=blue,
colorlinks, bookmarks]{hyperref}
\usepackage{graphicx,palatino,pifont,times}
\usepackage{amssymb}
\usepackage{amstext}
\usepackage{cite}
\usepackage{enumerate}
\usepackage{bm}
\usepackage{scalerel,stackengine}
\stackMath
\newcommand\reallywidehat[1]{%
	\savestack{\tmpbox}{\stretchto{%
			\scaleto{%
				\scalerel*[\widthof{\ensuremath{#1}}]{\kern-.6pt\bigwedge\kern-.6pt}%
				{\rule[-\textheight/2]{1ex}{\textheight}}
			}{\textheight}%
		}{0.5ex}}%
	\stackon[1pt]{#1}{\tmpbox}%
}

\newcommand{\be}{\begin{equation}}
	\newcommand{\ee}{\end{equation}}

\begin{document}
	
	\title{Generalized multivariate Fractal Interpolation Function and $\alpha$-Fractal Function}
	
	\author{Megha Pandey
		\and Pavjeet Singh$^*$ 
		\and S. K. Katiyar
		\and T. Som
	}
	
	
	\institute{Megha Pandey \at
		School of Mathematics, Northwest University Xi'an, Shaanxi Province, 710069, China\\
		\email{meghapandey1071996@gmail.com}
		\and
		Pavjeet Singh\at
		DR B R Ambedkar National Institute of technology, Jalandhar, India, 144011\\
	*Correspondence to	\email{pavjeetsingh23@gmail.com}
		\and
		S. K. Katiyar\at
		DR B R Ambedkar National Institute of technology, Jalandhar, India, 144011\\
		\email{sbhkatiyar@gmail.com}
		\and
		T. Som\at
		Indian Institute of Technology (Banaras Hindu University), Varanasi, India, 221005\\
		\email{tsom.apm@itbhu.ac.in}             
	}

	\maketitle
	\begin{abstract}
		In this paper, we introduce a new approach for constructing multivariate fractal interpolation functions and $\alpha$-fractal functions associated with multivariate functions. Unlike the existing methods that rely on the Banach contraction principle, here the construction is based on Matkowski and Rakotch contractions. While numerous methods for constructing multivariate fractal interpolation functions have been explored in the literature, the approach given in this paper is distinct in the sense that it generalizes all previously known techniques and provides a broader framework for such constructions. We propose a technique to develop nonlinear iterated function systems using the generalized contractions and establish that the attractors of such systems are the graphs of continuous multivariate functions interpolating theoretical data points. Furthermore, for the Rakotch contractions, the existence of an invariant Borel probability measure supported on the graph of the associated multivariate fractal interpolation function is explored. \\

		\noindent
		\textbf{Mathematical Subject Classification 2000:} Primary 28A80; Secondary 41A05; 41A29.
		\keywords{Multivariate continuous function, Fractal interpolation functions, Matkowski contraction, Rakotch contraction, $\alpha$-fractal function, Invariant Measure.}
	\end{abstract}

	\section{Introduction}
	Interpolation: A process of estimating the value of a function at a point from its values at nearby points or a technique to construct new data points within the range of a discrete set of known data points. Numerous interpolation schemes have been developed in the literature. Most of the given methods produce interpolants that are differentiable at almost every point of the domain, and consequently, these interpolants are smooth functions. 
	But in real life, many functions do not observe smoothness in their nature; for instance, we encounter many experimental signals that are tangled and rarely smooth. Therefore, to interpolate these types of functions, we need nonsmooth interpolants. To deal with such issues, in 1986 Barnsley\cite{barnsley1986fractal} introduced the concept of the Fractal Interpolation Function (FIF). To construct the FIF, Barnsley used the idea of an Iterated Function System (IFS). An FIF is a continuous function such that its graph is the attractor of an appropriately chosen IFS. Following this seminal work of Barnsley, many researchers have later generalized the notion of FIF. Numerous types of FIF have been explored in the literature so far. For instance,
	in \cite{barnsley1989hidden}, Barnsley et al. studied the concept of hidden variable fractal interpolation functions. In \cite{hardin1993fractal}, Hardin et al. explored the FIFs from $\mathbb{R}^n$ to $\mathbb{R}^m$ and their projections. In \cite{navascues2005fractal}, Navascues et al. introduced a new type of fractal interpolation function associated with a continuous real-valued univariate function, known as the $\alpha$-fractal function. It provides a systematic way to generate self-referential perturbations of a given continuous function called the base function. These functions are obtained using an IFS, where the scale factor $\alpha$ controls the fractal characteristics of the perturbation. Extending this idea to the multivariate case, the base function is defined on a compact domain in $\mathbb{R}^n$. In addition, the domain is divided into sub regions, and $\alpha$ becomes a multi-indexed parameter regulating local scaling. 
	In \cite{massopust2014fractal}, Massopust has introduced the concept of fractal functions and fractal surfaces. In \cite{verma2023dimensions}, Verma and Priyadarshi developed FIFs using Rakotch contractions and proved the existence of an invariant Borel measure supported on their graphs, along with dimensional estimates.
	In \cite{prithvi2022interpolative,prithvi2023generalized}, Prithvi and Katiyar introduced interpolative operators that connect classical fractal interpolation with multivalued fractal structures, thereby broadening the analytical and constructive framework of fractal functions. Their investigations on generalized Kannan-type mappings further established new contractive conditions applicable to IFS, ensuring the existence of attractors. In \cite{prithvi2025comments}, they examined the approximation aspects of fractal functions associated with Reich contractions. Moreover, their work on nonconventional IFSs \cite{prithvi2023revisiting} revisited and refined the Hutchinson–Barnsley framework, leading to richer classes of fractal attractors and enhanced applicability of fractal interpolation theory. In a related direction \cite{ullah2023generalized}, Ullah and Katiyar developed the concept of generalized $G$-Hausdorff spaces, providing an extended metric setting suitable for studying fractal sets under weaker structural assumptions.
	
	The study of fractal functions has alreagarnered considerable attention due to its applications in various real-life and scientific areas. Apart from constructing various types of FIF, people are focusing on analyzing its properties. Some vital characteristiincluding calculus, dimension, stability, perturbation error, andmoothness, have been extensively explored. For example, in \cite{barnsley2015bilinear}, Barnsley has reported the box dimension of the bilinear FIF. In \cite{akhtar2017box}, the concept of the box dimension of the $\alpha$-fractal function has been explored. In \cite{pandey2022fractal}, Pandey et al. have constructed a multivariate $\alpha$-fractal function and established some results on its dimension, and in \cite{pandey2022set}, the idea of set-valued $\alpha$-fractal functions has been proposed.
	
	\subsection{Motivation and Work done}
	The set of IFS is a powerful tool for producing and inspecting the FIF. The Banach contraction principle was vital in the set of IFS, which is used to construct the FIF in all the above-referred research. In the sequel, researchers have explored using the well-known fixed point results in the fixed point theory literature to construct the FIF in a more general setting, i.e., they have tried to use the weaker condition on IFS, which is used to produce the FIF. In \cite{strobin2013certain}, Strobin and Swaczyna studied certain generalizations of IFS, but the existence of fractal interpolation corresponding to those IFS was not known at that time. In 1962, Rakotch \cite{rakotch1962note} introduced the first substantial generalization of the Banach contraction principle in this direction. After that, some methods to construct the nonlinear FIF was initiated using generalized fixed point results instead of the Banach fixed point theorem. For example, in \cite{ri2017new,ri2018new,ri2019new}, Rakotch or Geraghty fixed point theory is applied. In \cite{ri2018newidea}, the idea of Matkowski and Rakotch's fixed point theory is used. 
	The idea of the construction of FIF using Rakotch or Geraghty, or Matkowski \cite{matkowski1975integrable} fixed point theory has been given for univariate and bivariate functions.  
	
	\noindent
	In this article, we have proposed the construction of FIF using Rakotch and Matkowski's fixed point theory for multivariate functions. We have shown the existence of multivariate FIF corresponding to the set of IFS that satisfy the Matkowski and the Rakotch contraction. Furthermore, we have demonstrated the formulation of the associated $\alpha$-fractal function arising from these contractions. In addition, we have established the existence of an invariant probability measure for IFSs satisfying Rakotch contractions and claimed that its support coincides with the associated attractor.

	\subsection{Delineation}
	The proposed paper is organized as follows. In the next section, we present the notations and preliminaries used in our paper. Section \ref{sec2} is dedicated to the construction of multivariate FIFs using Matkowski and Rakotch contractions. We have proved that the FIF constructed by this method is an attractor of a suitable IFS, where the set of IFS satisfies the Matkowski and Rakotch contractions, respectively. In Section \ref{sec4}, we construct $\alpha$-fractal functions corresponding to multivariate continuous functions using Matkowski and Rakotch contractions. In Section~\ref{sec5}, we investigate invariant probability measures corresponding to IFSs generated by Rakotch contractions and establish the existence of such measures supported on the graph of the multivariate FIF. Finally, the paper is concluded in Section~\ref{sec3} with some future directions.

	\section{Preliminaries and Notations}
	\subsection{Notations}
	Throughout this paper, we shall use the following notations:
	\begin{itemize}
		\item $\mathbb{N}$ : set of natural numbers
		\item $\mathbb{R}$ : set of real numbers
		\item $\mathbb{R}_{+}$: set of non-negative real numbers
		\item $I$ : unit interval $[0,1]$
		\item $I^q=I\times \cdots \times I$: $q$-times Cartesian product of $[0,1]$, where $q\in \mathbb{N}$
		\item $\mathcal{C}(I^q)$ : set of all real-valued continuous functions on $I^q$
		\item  $\mathcal{K}(\mathbb{R})=\{A\subset \mathbb{R}:A~\text{is a compact subset of}~\mathbb{R}\}$
		\item $\mathcal{H}_d(A,B)=\max \left\{\underset{a\in A}{\sup}\underset{b\in B}{\inf}\lvert a-b\rvert,\underset{b\in B}{\sup}\underset{a\in A}{\inf}\lvert b-a\rvert\right\}$ be the metric defined on $\mathcal{K}(\mathbb{R})$. It is broadly known as the Hausdorff metric.
	\end{itemize}
	
	\subsection{Preliminaries}
	\begin{definition}
		Let $X$ and $Y$ be metric spaces and $h:X\rightarrow Y$ be a function from $X$ to $Y$, then the graph of $h$ is defined as,
		\begin{equation}\label{Gf1}
			\mathcal{G}(h)=\{(x,h(x))~:~ x\in X \}.
		\end{equation}
	\end{definition}
	
	\begin{definition}
		Let $(X,d)$ be a metric space and $f: X\rightarrow X$ be a self-map. If for some function $\phi: \mathbb{R}_{+}\rightarrow \mathbb{R}_{+}$, we have
		\[d(f(x),f(y))\leq \phi(d(x,y)) \text{ for all } x,y\in X,\]
		then $f$ is said to be $\phi$-contraction.
	\end{definition}

	\begin{definition}
		Let $(X,d)$ be a metric space and $f$ be a self-map defined on $X$. If $f$ is a $\phi$-contraction such that $\phi:\mathbb{R}_{+}\rightarrow \mathbb{R}_{+}$ is a non-decreasing function and for every $t>0$, $\phi^{n}(t)\to 0$ as $n\to \infty$ ($\phi^n$ is the $n^{th}$ iteration of $\phi$). Then, $f$ is called the Matkowski contraction.
	\end{definition}
	
	\begin{definition}
		If $f$ is a $\phi$-contraction such that for any $t>0$, $\beta(t)=\frac{\phi(t)}{t}<1$ and the function $\beta$ is a non-increasing function. Then, $f$ is a Rakotch contraction.
	\end{definition}
	\begin{remark}
		It is easy to find that every Rakotch contraction map is a Matkowski contraction map.
	\end{remark}
	\begin{remark}\cite[Theorem $1$]{jachymski2007nonlinear}\label{concave}
		The map $f:X \rightarrow X$ is a Rakotch contraction if and
		only if the map $f$ is a $\phi$-contraction such that the map $\phi$ is strictly increasing and concave.
	\end{remark}
	
	\begin{example}
		Let $X=\mathbb{R}_+$ and $d$ be the Euclidean metric defined on $X$. Let $f:X\rightarrow X$ be a map defined on $X$ such that $f(x)=\frac{1}{1+x}$. Then, notice that for $\phi : \mathbb{R}_+\rightarrow \mathbb{R}_+$ defined as $\phi(t)=\frac{t}{1+t}$, we have
		\begin{align*}
			d(f(x),f(y))=& \left\lvert \frac{1}{1+x} -\frac{1}{1+y} \right\rvert
			= \left \lvert \frac{x-y}{1+x+y+xy} \right\rvert
			\leq  \frac{\lvert x-y \rvert}{1+\lvert x-y \rvert}=\phi (d(x,y)).
		\end{align*}
		Hence, $f$ is a $\phi$-contraction map. Now, it is easy to observe that $\phi$ is a non-decreasing function and $\phi^{n}(t)\to 0$ as $n\to \infty$ for $t>0$. Hence, $f$ is a Matkowski contraction.
	\end{example}

	\begin{example}
		Let $X=\mathbb{R}_+$ endowed with the Euclidean metric. Define $f:X \rightarrow X$ by $f(x)=\frac{x^2}{1+x}$. Then, note that for function $\phi: \mathbb{R}_{+}\rightarrow \mathbb{R}_{+}$, defined as $\phi(x)=\frac{x^2}{1+x}$, we have
		\begin{align*}
			d(f(x),f(y))=& \left\lvert \frac{x^2}{1+x} -\frac{y^2}{1+y} \right\rvert
			= \frac{\lvert x-y\rvert^2 }{1+\lvert x-y \rvert}=\phi (d(x,y)).
		\end{align*}
		Hence, $f$ is a $\phi$-contraction. Moreover, it is easy to verify that $\phi$ is a non-decreasing function such that $\phi(t)<t$ for all $t>0$ and $\phi^{n}(t)\to 0$ as $n\to\infty$. Hence, $f$ is a Matkowski contraction. However, notice that the function $\beta(t)=\frac{\phi(t)}{t}=\frac{t}{1+t}$ is an increasing function. Hence, $f$ is not a Rakotch contraction.
	\end{example}
	\begin{definition}\label{jensendefinition}\cite{abramovich2004refining}
		Jensen’s inequality states that if $\phi : [0,\infty) \rightarrow \mathbb{R}$ is a convex function, then
		\begin{equation}\label{jensen}
			\phi\left(\int f  d\mu\right) \le \int \phi \left(f(s)\right) d\mu(s),
		\end{equation}
		for every probability measure $\mu$ and every nonnegative, $\mu$-integrable function $f : X \to [0,\infty)$.  If $\phi$ is concave, then the inequality in \eqref{jensen} is reversed, i.e., 
		\[ \phi\left(\int f  d\mu\right) \geq \int \phi \left(f(s)\right) d\mu(s).\]
	\end{definition}	
	
	\begin{theorem}\emph{\cite{strobin2015attractors}}\label{attractor}
		Let $Y$ be a complete metric space and $\{Y; \omega_1,\ldots, \omega_{\mathcal{N}}\}$ be an IFS satisfying Matkowski contractions. Then, there exists a unique non-empty compact set $G\in \mathcal{K}(Y)$ such that 
		\[G=\bigcup_{i=1}^{\mathcal{N}}\omega_{i}(G).\]
		Further, for any $\mathcal{A}\in \mathcal{K}(X)$, $\mathcal{W}^n(\mathcal{A})$ converges to $G$ as $n\to \infty$, with respect to the Hausdorff metric $\mathcal{H}_d$, where $\mathcal{W}$ is defined as follows
		\[\mathcal{W}(\mathcal{A})=\bigcup_{i=1}^{\mathcal{N}}\omega_{i}(\mathcal{A}).\]
	\end{theorem}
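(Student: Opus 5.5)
The strategy is to show that the Hutchinson operator $\mathcal{W}$ is itself a Matkowski contraction on the complete metric space $(\mathcal{K}(Y),\mathcal{H}_d)$, and then apply Matkowski's fixed point theorem on that space. Throughout, $\mathcal{K}(Y)$ carries the Hausdorff metric induced by the metric $d$ of $Y$; this space is complete because $Y$ is complete.

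First I would reduce to a single comparison function. Each $\omega_i$ is a $\phi_i$-contraction, where $\phi_i$ is non-decreasing and $\phi_i^n(t)\to 0$ for every $t>0$. Put $\phi=\max_i\phi_i$. Then $\phi$ is non-decreasing, and every $\omega_i$ is a $\phi$-contraction. The delicate point is to check $\phi^n(t)\to0$, since the iterates of a maximum are not controlled directly by the iterates of the individual $\phi_i$. I would use the known characterization from Jachymski's work: a non-decreasing $\psi$ satisfies $\psi^n(t)\to0$ for all $t>0$ if and only if $\psi(t)<t$ and $\psi$ has no fixed points obtained as right-limits, i.e.\ $\limsup_{s\to t^+}\psi(s)<t$ for every $t>0$. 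Both conditions are stable under finite maxima, so $\phi$ is a Matkowski comparison function. I expect this to be the main technical obstacle. If I cannot cite the characterization, I would instead replace $\phi$ by its right-continuous upper regularization and prove the iterate property directly: a decreasing sequence $\phi^n(t)$ has a limit $s\ge0$, and if $s>0$ then right-continuity combined with $\phi(s)<s$ gives a contradiction.

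Next I would show $\mathcal{H}_d(\mathcal{W}(A),\mathcal{W}(B))\le\phi(\mathcal{H}_d(A,B))$. The compactness of $\mathcal{W}(A)$ follows from the continuity of the $\omega_i$. For the contraction estimate, I would first use $\mathcal{H}_d(\bigcup_i A_i,\bigcup_i B_i)\le\max_i\mathcal{H}_d(A_i,B_i)$. Then, for each single map, take $a\in A$ and choose $b\in B$ with $d(a,b)\le\mathcal{H}_d(A,B)$, which is possible by compactness. Monotonicity of $\phi$ gives $d(\omega_i a,\omega_i b)\le\phi(\mathcal{H}_d(A,B))$, and the symmetric argument handles the other half of the Hausdorff distance.

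Finally I would apply Matkowski's fixed point theorem. Starting from any $A$, the successive distances satisfy $\mathcal{H}_d(\mathcal{W}^{n+1}A,\mathcal{W}^nA)\le\phi^n(\mathcal{H}_d(\mathcal{W}A,A))\to0$. The standard Matkowski argument upgrades this to the Cauchy property: fix $\varepsilon>0$ with $\phi(\varepsilon)<\varepsilon$, choose $n$ so that the step size is below $\varepsilon-\phi(\varepsilon)$, and then $\mathcal{W}^n$ maps the ball of radius $\varepsilon$ about $\mathcal{W}^nA$ into itself. By completeness the sequence converges to some $G$. Continuity of $\mathcal{W}$ gives $\mathcal{W}(G)=G$. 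Uniqueness follows because two fixed points $G,G'$ satisfy $\mathcal{H}_d(G,G')\le\phi^n(\mathcal{H}_d(G,G'))\to0$. Convergence of $\mathcal{W}^n(A)$ to $G$ for every $A\in\mathcal{K}(Y)$ comes from $\mathcal{H}_d(\mathcal{W}^nA,G)\le\phi^n(\mathcal{H}_d(A,G))\to0$.
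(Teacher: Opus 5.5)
Your overall plan is the standard proof of this result: show that $\mathcal{W}$ is a Matkowski contraction on $(\mathcal{K}(Y),\mathcal{H}_d)$ with $\phi=\max_i\phi_i$, then run Matkowski's argument. The paper itself gives no proof, only the citation to Strobin. The Hausdorff estimate, completeness of the hyperspace, uniqueness and convergence are all fine.

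One slip in the Cauchy step: it is $\mathcal{W}$, not $\mathcal{W}^n$, that maps the closed ball $\{C:\mathcal{H}_d(C,\mathcal{W}^nA)\le\varepsilon\}$ into itself. This follows from $\mathcal{H}_d(\mathcal{W}C,\mathcal{W}^nA)\le\phi(\varepsilon)+\mathcal{H}_d(\mathcal{W}^{n+1}A,\mathcal{W}^nA)\le\varepsilon$, and it traps all later iterates.

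The genuine gap is in the step you flagged. The characterization you invoke is false: only its ``if'' direction holds. Take $\psi(t)=t/2$ on $[0,1]$, $\psi\equiv 1$ on $(1,2]$, and $\psi(t)=t/2$ for $t>2$. This $\psi$ is non-decreasing and $\psi(t)<t$ for $t>0$. Every orbit enters $[0,1]$ after finitely many steps, so $\psi^n(t)\to0$ for all $t>0$. Yet $\lim_{s\to1^+}\psi(s)=1$. Hence you cannot conclude that each $\phi_i$ satisfies your right-limit condition.

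Your fallback breaks on the same example. The right-continuous regularization $\tilde\psi(t)=\lim_{s\to t^+}\psi(s)$ has $\tilde\psi(1)=1$. So $\tilde\psi(s)<s$ fails and $\tilde\psi^n(1)\not\to 0$. Regularizing $\max_i\phi_i$ inherits this defect.

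The claim that $\max_i\phi_i$ is again a Matkowski function is nevertheless true. What you need is the correct criterion: a non-decreasing $\psi$ satisfies $\psi^n(t)\to0$ for every $t>0$ if and only if $\psi(t)<t$ for all $t>0$ and, for every $t>0$, there is some $s>t$ with $\psi(s)\le t$.

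\emph{Necessity.} If no such $s$ exists, then $\psi$ maps $(t,\infty)$ into $(t,\infty)$. Orbits started above $t$ then stay above $t$ and cannot tend to $0$.

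\emph{Sufficiency.} Suppose $\psi^n(t_0)\downarrow t>0$. Since $\psi(u)<u$ for $u>0$, the orbit is strictly decreasing and stays strictly above $t$. It eventually drops below $s$, so the next term is at most $\psi(s)\le t$, a contradiction.

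Both conditions pass to $\phi=\max_i\phi_i$. Given $t>0$, choose $s_i>t$ with $\phi_i(s_i)\le t$ and set $s=\min_i s_i>t$. Monotonicity gives $\phi_i(s)\le\phi_i(s_i)\le t$ for every $i$, so $\phi(s)\le t$. Alternatively, argue directly on a decreasing $\phi$-orbit with a positive limit, using these $s_i$.

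With this replacement your proof goes through. It is also unnecessary when all $\omega_i$ share one $\phi$, as assumed in the paper's Theorem~\ref{Matkowski}.
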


	\section{Fractal Interpolation Functions}\label{sec2}
	In this section, we discuss the multivariate FIFs from a new perspective.\\
	It is known that $\left(\mathcal{C}(I^q), d_{\mathcal{C}}\right)$, where \[d_{\mathcal{C}}(g,h)= \max_{\substack{(x_1,\ldots,x_q)\in I^q}}\left\lvert g(x_1,\ldots, x_q)-h(x_1,\ldots, x_q)\right\rvert \text{ for all } g,h \in \mathcal{C}(I^q),\] is a complete metric space.\\ 
	
	\noindent
	Let $N_1,\ldots, N_q$ be the positive integers greater than 1. The data of interpolation is given as
	\[\left\{\left(x_{1,i_1},\ldots, x_{q,i_q}, z_{(1,i_1),\ldots,(q,i_q)}\right): i_1=0,1,\ldots, N_1; ~ \ldots ~; i_q=0,1,\ldots, N_q\right\},\]
	such that $0=x_{k,0}<\cdots < x_{k,N_k}=1$ for each $I_k$, where $I_k$ denotes the $k^{th}$ unit interval in $I^q$.\\
	
	\noindent
	For a natural number, $N$ let us denote the following:
	\begin{align*}
		&\Sigma_N=\{1,\ldots,N\},~~ \Sigma_{N,0}=\{0,\ldots, N \},~~ \partial \Sigma_{N,0}=\{0,N\}, \text{ and } \text{int} \Sigma_{N,0}=\{1,\ldots,N-1\}.
	\end{align*}
	Further, a net $\Delta$ on $I^q$ is defined as follows:
	\[\Delta:=\left\{(x_{1,i_1}
	, \ldots , x_{q,i_q}) \in I ^q: i_k \in \Sigma_{N_k,0}, ~ 0= x_{k,0} <\ldots < x_{k,N_k} =1,~ k \in \Sigma_{q}\right\}.\]
	For each $i_k \in  \Sigma_{M_{k}}$, let us consider $I_{k,i_{k}} = [x_{k,i_{k}-1}, x_{k,i_{k}}]$ and define contractive homeomorphisms,  $u_{k,i_{k}}: I_k \rightarrow I_{k,i_k}$ such that
	\begin{equation}\label{affine}
		\begin{aligned}
			u_{k,i_k}(x_{k,0})=&x_{k,i_k-1}, \quad \quad u_{k,i_k}(x_{k,N_k})=x_{k,i_k}, \text{ and }\\
			\left\lvert u_{k,i_k}(x)-u_{k,i_k}(y) \right\rvert &\leq \mu_{k,i_k}\lvert x-y\rvert \text{ for all } x,y \in I_k, \text{ and } 0< \mu_{k,i_k}<1.
		\end{aligned}
	\end{equation}
	Define a function $\eta : \mathbb{Z}\times \{0, N_1, \ldots, N_q\}\rightarrow \mathbb{Z}$ by 
	\begin{equation*}
		\eta(i,0)=i-1, \quad \eta(i,N_1)=\cdots =\eta(i,N_q)=i.
	\end{equation*}
	
	\noindent
	Set $X=I^q\times \mathbb{R}$ and denote $\mathcal{D}_{0}$ as the Euclidean metric on $X$, i.e., for all $(x_1,\ldots, x_q, z^*)$ and  $(y_1,\ldots, y_q,z^{**})\in X$, we have
	\begin{align*}
		\mathcal{D}_0\left((x_1,\ldots,x_q,z^*),(y_1,\ldots,y_q,z^{**})\right)=&~\lVert (x_1,\ldots,x_q,z^*)-(y_1,\ldots,y_q,z^{**})\rVert_2\\
		=&~\sqrt{\lvert x_1-y_1\rvert ^2+\cdots+\lvert x_q-y_q\rvert^2+\lvert z^*-z^{**}\rvert^2}.
	\end{align*}
	For each $(i_1,\ldots,i_q)\in \prod_{k=1}^{q} \Sigma_{N_k}$, where $k\in \Sigma_q$, define $F_{i_1\cdots i_q}: X\rightarrow \mathbb{R}$ a continuous function such that 
	\begin{equation}\label{Fi}
		F_{i_1\cdots i_q}\left(x_{1,k_1},\ldots,x_{q,k_q},z_{(1,k_1)\cdots (q,k_q)}\right)=z_{\left(1,\eta(i_1,k_1)\right)\cdots \left(q,\eta(i_q,k_q)\right)}, 
	\end{equation}
	where $(k_1,\ldots k_q)\in \overset{q}{\prod_{\substack{i=1}}} \partial \Sigma_{N_i,0}$.
	Now define $\Omega_{i_1\cdots i_q}: X \rightarrow X$ such that 
	\begin{equation}\label{Wi}
		\Omega_{i_1\cdots i_q}(x_1,\ldots x_q, z)=\left(u_{1,i_1}(x_1),\ldots, u_{q,i_q}(x_q), F_{i_1\cdots i_q}(x_1,\ldots, x_q,z) \right),
	\end{equation}
	where $(i_1,\ldots,i_q)\in \prod_{k=1}^{q} \Sigma_{N_k}$.
	
	\noindent
	Denote $\mathcal{C}^*(I^q)$ and $\mathcal{C}^{**}(I^q)$ as
	\begin{align*}
		& \mathcal{C}^*(I^q) =\left\{f\in \mathcal{C}(I^q):~ f\left(x_{1,k_1},\ldots, x_{q,k_q}\right)=z_{(1,k_1)\cdots (q,i_q)} \text{ for } (k_1,\ldots, k_q)\in \overset{q}{\prod_{\substack{j=1}}} \partial \Sigma_{N_j,0}\right\}\\
		& \text{ and }\\
		& \mathcal{C}^{**}(I^q)=\Bigg\{f\in \mathcal{C}(I^q):~f\left(x_{1,i_1},\ldots, x_{q,i_q}\right)=z_{(1,i_1)\cdots (q,i_q)} \text{ for all } \left(x_{1,i_1},\ldots, x_{q,i_q}\right)\in \Delta \Bigg\},
	\end{align*}
	respectively. Note that $\mathcal{C}^*(I^q)$ and $\mathcal{C}^{**}(I^q)$ are closed subspaces of $\mathcal{C}(I^q)$, hence complete with respect to the metric $d_{\mathcal{C}}$.\\
	
	\noindent
	For all $f\in \mathcal{C}^*(I^q)$, define $T: \mathcal{C}^*(I^q)\rightarrow \mathcal{C}(I^q)$ such that
	\begin{equation}\label{Tf}
		Tf(x_1,\ldots, x_q) = F_{i_1\cdots i_q}\left(u_{1,i_1}^{-1}(x_1), \ldots, u_{q,i_q}^{-1}(x_q), f \left(u_{1,i_1}^{-1}(x_1), \ldots, u_{q,i_q}^{-1}(x_q)\right)\right)
	\end{equation}
	for $(x_1,\ldots, x_q)\in \prod_{k=1}^{q}I_{k,i_k}$, where $i_k\in \Sigma_{N_k}$.
	
	\begin{lemma}\label{operator}
		For all $f\in \mathcal{C}^*(I^q)$, we get $Tf\in \mathcal{C}^{**}(I^q)$, where $T$ is defined in \eqref{Tf}. This implies that $T: \mathcal{C}^{*}(I^q)\rightarrow \mathcal{C}^{**}(I^q)$ and $T^n: \mathcal{C}^{**}(I^q) \rightarrow \mathcal{C}^{**}(I^q)$ for $n\geq 2$.
	\end{lemma}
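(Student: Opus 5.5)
We need to show three things about $Tf$ for $f\in\mathcal{C}^*(I^q)$: it is well defined, it is continuous, and it interpolates every node of the net $\Delta$. The final claim about $T^n$ then follows because $\mathcal{C}^{**}(I^q)\subset\mathcal{C}^*(I^q)$: every grid vertex $(x_{1,k_1},\ldots,x_{q,k_q})$ with $k_j\in\partial\Sigma_{N_j,0}$ belongs to $\Delta$. So $T$ maps $\mathcal{C}^{**}(I^q)$ into itself, and induction gives $T^n:\mathcal{C}^{**}(I^q)\to\mathcal{C}^{**}(I^q)$.

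\emph{Interpolation.} Take a node $(x_{1,j_1},\ldots,x_{q,j_q})\in\Delta$ and choose a cell $\prod_k I_{k,i_k}$ containing it. Then for each $k$ either $j_k=i_k-1$ or $j_k=i_k$. By \eqref{affine}, $u_{k,i_k}^{-1}(x_{k,j_k})=x_{k,k_k}$, where $k_k=0$ if $j_k=i_k-1$ and $k_k=N_k$ if $j_k=i_k$. In both cases $\eta(i_k,k_k)=j_k$. Since $f\in\mathcal{C}^*(I^q)$, $f(x_{1,k_1},\ldots,x_{q,k_q})=z_{(1,k_1)\cdots(q,k_q)}$. Applying \eqref{Fi} gives $Tf(x_{1,j_1},\ldots,x_{q,j_q})=z_{(1,\eta(i_1,k_1))\cdots(q,\eta(i_q,k_q))}=z_{(1,j_1)\cdots(q,j_q)}$.

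\emph{Continuity and well-definedness.} On each closed cell $\prod_k I_{k,i_k}$, $Tf$ is a composition of continuous maps: the inverse homeomorphisms $u_{k,i_k}^{-1}$, $f$, and $F_{i_1\cdots i_q}$. So each piece is continuous, and the cells form a finite closed cover of $I^q$. By the pasting lemma, $Tf$ is well defined and continuous provided adjacent pieces agree on their common faces. This is the main obstacle.

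Consider two cells differing only in the $k$-th index, $i_k$ and $i_k+1$, sharing the face $x_k=x_{k,i_k}$. On that face the left piece evaluates $u_{k,i_k}^{-1}(x_{k,i_k})=1$ in the $k$-th slot, and the right piece evaluates $u_{k,i_k+1}^{-1}(x_{k,i_k})=0$. I would follow the standard multivariate FIF construction and use the matching conditions on the $F$'s that the paper implicitly requires. Namely, for any $x$ with $x_k=x_{k,N_k}$ resp.\ $x_k=x_{k,0}$ and the other coordinates free, $F_{\ldots i_k\ldots}(\ldots,x_{k,N_k},\ldots,f(\cdot))=F_{\ldots i_k+1\ldots}(\ldots,x_{k,0},\ldots,f(\cdot))$. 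This is the ``join-up'' condition, usually stated as
\[F_{i_1\cdots i_k\cdots i_q}(x_1,\ldots,x_{k,N_k},\ldots,x_q,z)=F_{i_1\cdots (i_k+1)\cdots i_q}(x_1,\ldots,x_{k,0},\ldots,x_q,z')\]
whenever the $z,z'$ come from a function in $\mathcal{C}^*(I^q)$ on the respective opposite faces. At the vertices this is exactly what \eqref{Fi} guarantees, since both sides equal the same $z$ value by the computation above. For the whole face, I expect to invoke this matching hypothesis on the $F_{i_1\cdots i_q}$ as part of the setup, stated explicitly. Possibly the intended case is the one where the $f$-values on opposite faces are controlled only at vertices, i.e.\ $q=1$ or a restricted class. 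Once agreement on shared faces holds, lower-dimensional intersections follow by the same argument applied coordinatewise. Hence $Tf\in\mathcal{C}(I^q)$, and with the interpolation step, $Tf\in\mathcal{C}^{**}(I^q)$.
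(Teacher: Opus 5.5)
Your interpolation step and your derivation of the $T^n$ claim from $\mathcal{C}^{**}(I^q)\subset\mathcal{C}^*(I^q)$ are the paper's argument. The paper splits the node computation into three cases (cells $\prod_k I_{k,i_k}$, cells $\prod_k I_{k,i_k+1}$, and mixed cells); your observation $j_k\in\{i_k-1,i_k\}$, together with $\eta(i_k,0)=i_k-1$ and $\eta(i_k,N_k)=i_k$, handles all three at once.

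Where you part ways with the paper is continuity, and your objection is correct. The paper only asserts that well-definedness is ``easy to observe'' and that $Tf$ is continuous on each closed cell. For $q\ge 2$ this does not even guarantee that $Tf$ is well defined. Adjacent cells share whole faces, while \eqref{Fi} constrains $F_{i_1\cdots i_q}$ only at the $2^q$ corners of $I^q$, and $\mathcal{C}^*(I^q)$ pins $f$ only there.

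In fact the lemma is false as stated for $q\ge 2$. Take $q=2$, $N_1=N_2=2$, $x_{k,1}=\tfrac12$, $u_{k,1}(x)=x/2$, $u_{k,2}(x)=(x+1)/2$, all data $z_{(1,i_1)(2,i_2)}=0$, and $F_{i_1i_2}(x_1,x_2,z)=z/2$. This satisfies \eqref{Fi} and the hypothesis of Theorem~\ref{Matkowski} with $\phi(t)=t/2$. Let $f(x_1,x_2)=x_1x_2(1-x_2)\in\mathcal{C}^*(I^2)$ and $0<x_2<\tfrac12$. The cell $I_{1,1}\times I_{2,1}$ prescribes $Tf(\tfrac12,x_2)=\tfrac12 f(1,2x_2)=x_2(1-2x_2)$, while the cell $I_{1,2}\times I_{2,1}$ prescribes $\tfrac12 f(0,2x_2)=0$.

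So the step you could not close cannot be closed from the stated hypotheses. It is a missing hypothesis in the lemma, not a missing idea on your side. The same defect affects Theorem~\ref{Matkowski}, whose proof uses $T$ as a self-map of $\mathcal{C}^*(I^q)$. For $q=1$ the shared faces are interior nodes, your vertex computation already covers them, and the lemma holds as stated.

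What you should tighten is the form of the extra hypothesis. Let $y$ be a non-corner point of $I^q$ with $y_k=1$, and let $y'$ be obtained from $y$ by setting $y_k=0$. Then $(f(y),f(y'))$ ranges over all of $\mathbb{R}^2$ as $f$ ranges over $\mathcal{C}^*(I^q)$. Hence your condition ``whenever $z,z'$ come from a function in $\mathcal{C}^*(I^q)$'' is equivalent to the following: $F_{i_1\cdots i_q}(\cdot,z)$ is independent of $z$ on the face $\{y_k=1\}$ when $i_k<N_k$ and on the face $\{y_k=0\}$ when $i_k>1$ (by continuity this includes the corners of these faces), and these $z$-free values agree with those of the neighbouring map.

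A typical instance is $F_{i_1\cdots i_q}(x,z)=\lambda_{i_1\cdots i_q}(x)\,z+g_{i_1\cdots i_q}(x)$, with $\lambda_{i_1\cdots i_q}=0$ on $\partial I^q$ and the $g_{i_1\cdots i_q}$ matching across shared faces. Alternatively, restrict $\mathcal{C}^*(I^q)$ to functions with prescribed values $\varphi$ on $\partial I^q$. Then impose the face matching only at $z=\varphi(y)$, $z'=\varphi(y')$, and require that $T$ reproduce $\varphi$ on $\partial I^q$. With either version stated explicitly, your pasting-lemma argument completes the proof, including the reduction of lower-dimensional intersections to codimension-one faces.
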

	\begin{proof}
		From \eqref{Tf}, it is easy to observe the well-definedness of $T$. Now to see that for all $f\in \mathcal{C}^*(I^q)$, $Tf\in \mathcal{C}^{**}(I^q)$.
		Let $f\in \mathcal{C}^*(I^q)$ and $(i_1,\ldots, i_q)\in \prod_{k=1}^{q}\text{int}\Sigma_{N_k,0}$. 
		Then, three cases may arise:\\
		\textbf{Case 1}. If $\left(x_{1,i_1},\ldots,x_{q,i_q}\right)\in \prod_{k=1}^{q}I_{k,i_k}$, then by \eqref{Tf}, we have
		\begin{align*}
			Tf\left(x_{1,i_1},\ldots,x_{q,i_q}\right)=&F_{i_1\cdots i_q}\left(u_{1,i_1}^{-1}(x_{1,i_1})\ldots, u_{q,i_q}^{-1}(x_{q,i_q}),f\left(u_{1,i_1}^{-1}(x_{1,i_1})\ldots, u_{q,i_q}^{-1}(x_{q,i_q})\right)\right)\\
			=& F_{i_1\cdots i_q}\left(x_{1,N_1},\ldots, x_{q,N_q}, f\left(x_{1,N_1},\ldots, x_{q,N_q}\right)\right)\\
			=& F_{i_1\cdots i_q}\left(x_{1,N_1},\ldots,x_{q,N_q},z_{(1,N_1)\cdots (q,N_q)}\right)\\
			=& z_{\left(1,\eta(i_1,N_1)\right) \cdots \left(q,\eta(i_q,N_q)\right)} =    z_{(1,i_1)\cdots (q,i_q)}.
		\end{align*}
		\textbf{Case 2}. If $\left(x_{1,i_1},\ldots,x_{q,i_q}\right)\in \prod_{k=1}^{q}I_{k,i_{k+1}}$, then we have
		\begin{align*}
			&Tf\left(x_{1,i_1},\ldots,x_{q,i_q}\right)\\
			=&F_{i_1+1\cdots i_q+1}\left(u_{1,i_1}^{-1}(x_{1,i_1})\ldots, u_{q,i_q}^{-1}(x_{q,i_q}),f\left(u_{1,i_1}^{-1}(x_{1,i_1})\ldots, u_{q,i_q}^{-1}(x_{q,i_q})\right)\right)\\
			=& F_{i_1+1\cdots i_q+1}\left(x_{1,0},\ldots, x_{q,0},f\left(x_{1,0},\ldots,x_{q,0}\right)\right)\\
			=& F_{i_1+1\cdots i_q+1}\left(x_{1,0}\ldots, x_{q,0},z_{(1,0)\cdots (q,0)}\right)\\
			=&z_{(1,\eta(i_1+1,0))\cdots (q,\eta(i_q+1,0))}=z_{(1,i_1)\cdots (q,i_q)}.
		\end{align*}
		\textbf{Case 3}. If $\left(x_{1,i_1},\ldots,x_{q,i_q}\right)\in \prod_{k=1}^{q}I_{k,i_{k}^{m}}$, where $i^{m}_{k}=i_k$ for some $k$ and $i_{k}^{m}=i_k+1$ for others, then using similar arguments as above, we get
		\[Tf\left(x_{1,i_1},\ldots, x_{q,i_q}\right)=z_{(1,i_1)\cdots (q, i_q)}.\]
		Similarly, one can prove that $Tf\left(x_{1,i_1},\ldots,x_{q,i_q}\right)=z_{(1,i_1)\cdots (q,i_q)}$ for all $(i_1,\ldots,i_q)\in \prod_{k=1}^{q}\partial \Sigma_{N_k,0}$. Hence, $Tf\left(x_{1,i_1},\ldots,x_{q,i_q}\right)=z_{(1,i_1)\cdots (q,i_q)}$ for all $\left(x_{1,i_1},\ldots,x_{q,i_q}\right)\in \Delta$, i.e., for all $f\in \mathcal{C}^{*}(I^q)$, we have $Tf\in \mathcal{C}^{**}(I^q)$.\\
		By \eqref{Tf}, $Tf$ is continuous on the $\prod_{k=1}^{q}I_{k,i_k}$ for all $(i_1,\ldots,i_q)\in \prod_{k=1}^{q}\Sigma_{N_k}$. Hence, $T^n :\mathcal{C}^{**}(I^q)\rightarrow \mathcal{C}^{**}(I^q)$ for all $n\geq 2$.
	\end{proof}
	\noindent
	\begin{theorem}\label{Matkowski}
		For all $(i_1,\ldots,i_q)\in \prod_{k=1}^{q}\Sigma_{N_k,0}$, let $F_{i_1\cdots i_q}$ be Matkowski contractions (with same function $\phi$) with respect to the $(q+1)^{th}$ variable, i.e., for some non-decreasing function $\phi: \mathbb{R}_{+}\rightarrow \mathbb{R}_{+}$ with $\phi^{n}(t)\to 0$ for $t>0$, each map, $F_{i_1\cdots i_q}$ satisfies :
		\[\left\lvert F_{i_1\cdots i_q}(x_1,\ldots, x_q,z)-F_{i_1\cdots i_q}(x_1,\ldots,x_q,z^*) \right\rvert\leq \phi \left(\lvert z-z^*\rvert\right)\]
		$\text{for all } (x_1,\ldots,x_q)\in I^q \text{ and } z,z^*\in \mathbb{R}.$ Then, the operator $T$ is a Matkowski contraction. Hence, there is a unique continuous function $f^*:I^q\to \mathbb{R}$ which is a fixed point of $T$. In particular, $f^*\left(x_{1,i_1},\ldots,x_{q,i_q}\right)=z_{(1,i_1)\cdots (q,i_q)}$. Moreover, the graph $\mathcal{G}$ of $f^*$ is invariant with respect to $\left\{X, \Omega_{i_1\cdots i_q}:~ (i_1,\ldots,i_q)\in \prod_{k=1}^{q}\Sigma_{N_k}\right\}$, i.e.,
		\[\mathcal{G}=\bigcup_{i_1=1}^{N_1}\cdots \bigcup_{i_q=1}^{N_q}\Omega_{i_1\cdots i_q}(\mathcal{G}).\]
	\end{theorem}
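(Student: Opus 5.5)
The plan is to show directly that $T$ is a $\phi$-contraction on the complete metric space $\left(\mathcal{C}^{**}(I^q), d_{\mathcal{C}}\right)$ for the same $\phi$. Then we invoke the Matkowski fixed point theorem: a $\phi$-contraction with $\phi$ non-decreasing and $\phi^n(t)\to 0$ for $t>0$ on a complete metric space has a unique fixed point, and the Picard iterates converge to it. By Lemma \ref{operator}, $T$ maps $\mathcal{C}^{*}(I^q)$ into $\mathcal{C}^{**}(I^q)\subset \mathcal{C}^{*}(I^q)$. We may therefore regard $T$ as a self-map of $\mathcal{C}^{**}(I^q)$, or of $\mathcal{C}^{*}(I^q)$, both of which are closed in $\mathcal{C}(I^q)$ and hence complete. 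One point needs a short check: well-definedness and continuity of $Tf$ on the shared faces of adjacent boxes $\prod_k I_{k,i_k}$. I would take this from Lemma \ref{operator} as stated.

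For the contraction estimate, fix $f,g\in \mathcal{C}^{**}(I^q)$ and $(x_1,\ldots,x_q)\in\prod_k I_{k,i_k}$, and write $\tilde x=(u_{1,i_1}^{-1}(x_1),\ldots,u_{q,i_q}^{-1}(x_q))$. By \eqref{Tf} and the hypothesis on $F_{i_1\cdots i_q}$,
\[
\lvert Tf(x_1,\ldots,x_q)-Tg(x_1,\ldots,x_q)\rvert\le \phi\left(\lvert f(\tilde x)-g(\tilde x)\rvert\right)\le \phi\left(d_{\mathcal{C}}(f,g)\right).
\]
The second inequality uses only that $\phi$ is non-decreasing. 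Taking the maximum over all boxes gives $d_{\mathcal{C}}(Tf,Tg)\le\phi(d_{\mathcal{C}}(f,g))$, so $T$ is a Matkowski contraction.

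This yields a unique fixed point $f^*\in\mathcal{C}^{**}(I^q)$. The interpolation property $f^*(x_{1,i_1},\ldots,x_{q,i_q})=z_{(1,i_1)\cdots(q,i_q)}$ is then automatic from membership in $\mathcal{C}^{**}(I^q)$. Uniqueness among functions in $\mathcal{C}^*(I^q)$ also holds: any fixed point $h$ in $\mathcal{C}^*(I^q)$ satisfies $h=Th\in\mathcal{C}^{**}(I^q)$, so $h=f^*$.

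For invariance of the graph, I would compute $\Omega_{i_1\cdots i_q}(\mathcal{G})$ directly. A point $(t_1,\ldots,t_q,f^*(t))$ is sent to $\left(u_{1,i_1}(t_1),\ldots,u_{q,i_q}(t_q),F_{i_1\cdots i_q}(t,f^*(t))\right)$. Set $x_k=u_{k,i_k}(t_k)$. The fixed point equation $f^*=Tf^*$ gives $F_{i_1\cdots i_q}(t,f^*(t))=f^*(x_1,\ldots,x_q)$. Since each $u_{k,i_k}$ is a homeomorphism onto $I_{k,i_k}$, this shows
\[
\Omega_{i_1\cdots i_q}(\mathcal{G})=\mathcal{G}\cap\left(\prod_k I_{k,i_k}\times\mathbb{R}\right).
\]
The boxes $\prod_k I_{k,i_k}$ cover $I^q$, so the union over all indices is $\mathcal{G}$.

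I expect the main obstacle to be the remaining Matkowski fixed point step, if it is not simply quoted. Suppose one must prove it. First, $d(T^{n}f,T^{n+1}f)\le\phi^n(d(f,Tf))\to0$. The Cauchy property then follows from the standard Matkowski argument. Given $\varepsilon>0$, we have $\phi(\varepsilon)<\varepsilon$, because $\phi(t)\ge t$ together with monotonicity would force $\phi^n(t)\ge t$, contradicting $\phi^n(t)\to 0$. Choose $n$ with $d(T^nf,T^{n+1}f)<\varepsilon-\phi(\varepsilon)$. Then $T$ maps the closed ball $\bar B(T^nf,\varepsilon)$ into itself. So all later iterates stay within $\varepsilon$ of $T^nf$, which gives the Cauchy property. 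Continuity of $T$ follows from $\phi(t)<t$, so the limit is a fixed point, and uniqueness follows from $d(f,g)\le\phi^n(d(f,g))\to0$.
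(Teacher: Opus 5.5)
Apart from the one step you delegated, this is the paper's proof. It uses the same pointwise estimate through the monotonicity of $\phi$, the same observation that a fixed point of $T$ on $\mathcal{C}^{*}(I^q)$ lies in $\mathcal{C}^{**}(I^q)$, and the same box-by-box computation of $\Omega_{i_1\cdots i_q}(\mathcal{G})$. Your additions are correct: uniqueness within $\mathcal{C}^*(I^q)$, the sketch of Matkowski's theorem, and the identity $F_{i_1\cdots i_q}(t,f^*(t))=f^*(u_{1,i_1}(t_1),\ldots,u_{q,i_q}(t_q))$. That identity is the correctly oriented version of the one the paper prints with $u^{-1}$.

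The check you flagged and handed to Lemma~\ref{operator} is a genuine gap, and the lemma does not close it. Its proof only verifies the values of $Tf$ at the nodes of $\Delta$ and simply asserts well-definedness. For $q\ge 2$, however, adjacent boxes share $(q-1)$-dimensional faces, and the corner conditions \eqref{Fi} impose nothing there.

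Here is a counterexample. Take $q=2$, $N_1=N_2=2$, nodes $0,\tfrac12,1$, $u_{k,1}(t)=t/2$, $u_{k,2}(t)=(t+1)/2$, and all data $z=0$. Let $F_{11}(x_1,x_2,z)=\tfrac z2+x_1\sin(\pi x_2)$ and $F_{12}=F_{21}=F_{22}=\tfrac z2$. Every hypothesis holds with $\phi(t)=t/2$. Yet for $f\equiv 0$, the formulas \eqref{Tf} from the boxes $(1,1)$ and $(2,1)$ give $\sin(2\pi x_2)$ and $0$ on their common edge $\{\tfrac12\}\times[0,\tfrac12]$.

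The conclusion itself also fails. Suppose the graph of some $f^*$ were invariant. Comparing $\Omega_{11}$ with $\Omega_{21}$ along $x_1=\tfrac12$ forces $f^*(0,t)-f^*(1,t)=2\sin(\pi t)$ for all $t\in[0,1]$. Comparing $\Omega_{12}$ with $\Omega_{22}$ forces $f^*(0,t)=f^*(1,t)$, which is impossible.

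So for $q\ge 2$, neither your argument nor the paper's proves the statement as given. An extra face-compatibility hypothesis is needed to make the formulas \eqref{Tf} from adjacent boxes agree on shared faces. The literature on fractal interpolation surfaces does this by prescribing boundary values and imposing matching conditions on the $F_{i_1\cdots i_q}$. With such a hypothesis, your estimate and graph computation go through as written. For $q=1$ the shared faces are just nodes, Lemma~\ref{operator} genuinely covers them, and your proof is complete.
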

	
	\begin{proof}
		Since for all $f\in \mathcal{C}^*(I^q)$, $Tf\in \mathcal{C}^{**}(I^q)$, therefore we have
		{\footnotesize{
				\begin{align}
					&d_{\mathcal{C}}(Tg, Th)\nonumber\\
					=& \sup_{(x_1,\ldots,x_q)\in I^q} \lvert Tg(x_1,\ldots,x_q)-Th(x_1,\ldots,x_q)\rvert\nonumber\\
					=& \max_{(i_1,\ldots,i_q)\in \overset{q}{\underset{k=1}{\prod}}\Sigma_{N_k}}\sup_{(x_1,\ldots,x_q)\in \overset{q}{\underset{k=1}{\prod}}I_{k,i_k}}\left\lvert F_{i_1\cdots i_q}\left(u_{i,i_1}^{-1}(x_1),\ldots,u_{q,i_q}^{-1}(x_q),g\left(u_{i,i_1}^{-1}(x_1),\ldots,u_{q,i_q}^{-1}(x_q)\right)\right)\right.\nonumber\\
					&\hspace{4.5cm}\left.-F_{i_1\cdots i_q}\left(u_{i,i_1}^{-1}(x_1),\ldots,u_{q,i_q}^{-1}(x_q),h\left(u_{i,i_1}^{-1}(x_1),\ldots,u_{q,i_q}^{-1}(x_q)\right)\right) \right\rvert\nonumber\\
					\leq&  \max_{(i_1,\ldots,i_q)\in \overset{q}{\underset{k=1}{\prod}}\Sigma_{N_k}}\sup_{(x_1,\ldots,x_q)\in \overset{q}{\underset{k=1}{\prod}}I_{k,i_k}} \phi \left\lvert g\left(u_{i,i_1}^{-1}(x_1),\ldots,u_{q,i_q}^{-1}(x_q)\right)-h\left(u_{i,i_1}^{-1}(x_1),\ldots,u_{q,i_q}^{-1}(x_q)\right)\right\rvert. \label{eq3.5}
		\end{align}}}
		Since $\phi:[0,\infty)\rightarrow [0,\infty)$ is a non-decreasing function and $u_{k,i_k}^{-1}:\left[x_{k,i_k-1},x_{k,i_k}\right]\rightarrow \left[x_{k,0},x_{k,N_k}\right]$ for all $i_k\in \Sigma_{N_k}$, where $k\in \Sigma_{q}$, then for all $\left(x_{1,*},\ldots, x_{q,*}\right)\in \prod_{k=1}^{q}I_{k,i_{k0}}$ for all $\left(i_{10},\ldots,i_{q0}\right)\in \prod_{k=1}^{q}\Sigma_{N_k}$, we have
		{\footnotesize{
				\begin{align*}
					& \phi\left(\left\lvert g\left(u_{1,i_{10}}^{-1}(x_{1,*}),\ldots, u_{q,i_{q0}}^{-1}(x_{q,*})\right)-h\left(u_{1,i_{10}}^{-1}(x_{1,*}),\ldots, u_{q,i_{q0}}^{-1}(x_{q,*})\right) \right\rvert\right)\\
					\leq & \phi \left(\max_{(x_1,\ldots,x_q)\in \overset{q}{\underset{k=1}{\prod}}I_{k,i_k}}\left\lvert g\left(u_{1,i_{10}}^{-1}(x_{1,*}),\ldots, u_{q,i_{q0}}^{-1}(x_{q,*})\right)-h\left(u_{1,i_{10}}^{-1}(x_{1,*}),\ldots, u_{q,i_{q0}}^{-1}(x_{q,*})\right)\right\rvert\right)\\
					\leq & \phi \left(\max_{(x_1,\ldots,x_q)\in I^q}\left\lvert g\left(u_{1,i_{10}}^{-1}(x_{1,*}),\ldots, u_{q,i_{q0}}^{-1}(x_{q,*})\right)-h\left(u_{1,i_{10}}^{-1}(x_{1,*}),\ldots, u_{q,i_{q0}}^{-1}(x_{q,*})\right)\right\rvert\right)
					=\phi\left(d_{\mathcal{C}}(g,h)\right).
		\end{align*}}}
		Since $(x_{1,*},\ldots,x_{q,*})$ and $(i_{10},\ldots,i_{q0})$ are arbitrary, therefore we have
		{\footnotesize{
				\begin{align}
					&\sup_{(x_1,\ldots, x_q)\in \overset{q}{\underset{k=1}{\prod}}I_{k,i_{k0}}}\phi\left(\left\lvert g\left(u_{1,i_{10}}^{-1}(x_{1}),\ldots, u_{q,i_{q0}}^{-1}(x_{q})\right)-h\left(u_{1,i_{10}}^{-1}(x_{1}),\ldots, u_{q,i_{q0}}^{-1}(x_{q})\right) \right\rvert\right)\nonumber\\
					\leq& \max_{(i_1,\ldots,i_q)\in \overset{q}{\underset{k=1}{\prod}}\Sigma_{N_k}}\sup_{(x_1,\ldots, x_q)\in \overset{q}{\underset{k=1}{\prod}}I_{k,i_{k0}}}\phi\left(\left\lvert g\left(u_{1,i_{10}}^{-1}(x_{1}),\ldots, u_{q,i_{q0}}^{-1}(x_{q})\right)-h\left(u_{1,i_{10}}^{-1}(x_{1}),\ldots, u_{q,i_{q0}}^{-1}(x_{q})\right) \right\rvert\right)\nonumber\\
					\leq & \phi\left(d_{\mathcal{C}}(g,h)\right) \label{eq3.6}.
		\end{align}}}
		
		\noindent
		Using \eqref{eq3.5} and \eqref{eq3.6}, we get 
		\[d_{\mathcal{C}}(Tg, Th)\leq \phi\left(d_{\mathcal{C}}(g,h)\right).\]
		This proves that $T$ is a Matkowski $\phi$-contraction map on $\left(\mathcal{C}^*(I^q),d_{\mathcal{C}}\right)$. Hence, $T$ has a fixed point, say $f^*$ in $\mathcal{C}^*(I^q)$, i.e., $Tf^*(x_1,\ldots,x_q)=f^*(x_1,\ldots,x_q)$ for all $(x_1,\ldots,x_q)\in \mathcal{C}^*(I^q)$. Since $T:\mathcal{C}^{*}(I^q)\rightarrow \mathcal{C}^{**}(I^q)$, we have $f^*(x_1,\ldots,x_q)=Tf^*(x_1,\ldots,x_q)\in \mathcal{C}^{**}(I^q).$ Therefore, 
		\[f^*\left(x_{1,i_1},\ldots, x_{q,i_q}\right)=z_{(1,i_1)\cdots(q,i_q)} \text{ for all } (i_1,\ldots,i_q)\in \prod_{k=1}^{q}\Sigma_{N_k,0}.\]
		Since $f^*$ is a fixed point of $T$, then for all $(x_1,\ldots, x_q)\in I^q$, we have
		\begin{align*}
			f^*\left(u_{1,i_1}^{-1}(x_1),\ldots, u_{q,i_q}^{-1}(x_q)\right)= & Tf^*\left(u_{1,i_1}^{-1}(x_1),\ldots, u_{q,i_q}^{-1}(x_q)\right)\\
			&=F_{i_1\cdots i_q}\left(x_1,\ldots,x_q,f^*(x_1,\ldots, x_q)\right).
		\end{align*}
		Now since
		\[\Omega_{i_1\cdots i_q}(x_1,\ldots,x_q,z^*)=\left(u_{1,i_1}(x_1),\ldots,u_{q,i_q}(x_q), F_{i_1\cdots i_q}(x_1,\ldots,x_q,z^*)\right)\]
		for all $(i_1,\ldots,i_q)\in \prod_{k=1}^{q}\Sigma_{N_k}$. Then, we have
		\begin{align*}
			&\Omega_{i_1\cdots i_q}\left(\mathcal{G}(f^*)\right)\\
			=&\left\{\Omega_{i_1\cdots i_q}\left(x_1,\ldots,x_q,f^*(x_1,\ldots, x_q)\right): (x_1,\ldots, x_q)\in I^q\right\}\\
			=&\left\{\left(u_{1,i_1}(x_1),\ldots,u_{q,i_q}(x_q), F_{i_1\cdots i_q}\left(x_1,\ldots,x_q,f^*(x_1,\ldots, x_q)\right)\right): (x_1,\ldots,x_q)\in I^q\right\}\\
			=& \left\{\left(x_1,\ldots,x_q,f^*(x_1,\ldots,x_q)\right): (x_1,\ldots, x_q)\in \prod_{k=1}^{q}I_{k,i_k}\right\}.
		\end{align*}
		Hence,
		\begin{align*}
			\mathcal{G}(f^*)=&\left\{(x_1,\ldots, x_q, f^*(x_1,\ldots,x_q)): (x_1,\ldots,x_q)\in I^q\right\}\\
			=&\bigcup_{i_1=1}^{N_1}\cdots \bigcup_{i_q=1}^{N^q}\left\{\left(x_1,\ldots,x_q,f^*(x_1,\ldots,x_q)\right): (x_1,\ldots, x_q)\in \prod_{k=1}^{q}I_{k,i_k}\right\}\\
			=& \bigcup_{i_1=1}^{N_1}\cdots \bigcup_{i_q=1}^{N^q} \Omega_{i_1\cdots i_q}(\mathcal{G}(f^*)).
		\end{align*}
		This completes the proof.
	\end{proof}
	\begin{theorem}\label{rakotchthoerem}
		For any $t>0$, let $\alpha(t)=\frac{\phi(t)}{t}<1$ and the function $\alpha$ be a non-increasing function (as the function $\phi$) with respect to $(q+1)^{th}$ variable and Lipschitz with respect to the rest of the variables, i.e., for some $\mathcal{M}_1,\ldots,  \mathcal{M}_q$ and some non-decreasing function $\phi: \mathbb{R}_+\rightarrow \mathbb{R}_+$ with $\phi(t)<t$ for $t>0$ such that the map $t\to \frac{\phi(t)}{t}$ is non-increasing, and
		{\footnotesize{
				\begin{equation}\label{rakotch}
					\left\lvert F_{i_1\cdots i_q}(x_1,\ldots,x_q,z^*)-F_{i_1\cdots i_q}(y_1,\ldots, y_q,z^{**})\right\rvert \leq\mathcal{M}_1\lvert x_1-y_1 \rvert+\ldots + \mathcal{M}_{q}\lvert x_q -y_q \rvert + \phi \left(\lvert z^*-z^{**}\rvert\right) 
				\end{equation}
		}}
		for all $(x_1,\ldots, x_q), (y_1,\ldots,y_q)\in I^q,$ and  $z^*,z^{**}\in \mathbb{R}$. Then, there is a metric $\mathcal{D}_{\theta}$ on $X$, equivalent to the Euclidean metric $\mathcal{D}_{0}$ such that for all $(i_1,\ldots,i_q)\in \prod_{k=1}^{q}\Sigma_{N_k}$, $\Omega_{i_1\cdots i_q}$ are Rakotch contraction maps with respect to $\mathcal{D}_{\theta}$.
		In particular, there exists a unique non-empty compact set $\mathcal{G}\subset X=I^q\times \mathbb{R}$ such that 
		\[\mathcal{G}=\bigcup_{i_1=1}^{N_1}\cdots \bigcup_{i_q=1}^{N_q}\Omega_{i_1\cdots i_q}(\mathcal{G}).\]
		Moreover, $\mathcal{G}$ is the graph of a continuous function $f^*: I^q\rightarrow \mathbb{R}$ which is a fixed point of the operator $T$ defined in \eqref{Tf}.
	\end{theorem}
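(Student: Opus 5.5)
The plan is to follow the classical Barnsley-type renorming argument, adapted to the nonlinear function $\phi$. For $\theta>0$, define on $X=I^q\times\mathbb{R}$
\[
\mathcal{D}_{\theta}\big((x_1,\ldots,x_q,z^*),(y_1,\ldots,y_q,z^{**})\big)=\sum_{k=1}^{q}\lvert x_k-y_k\rvert+\theta\,\lvert z^*-z^{**}\rvert .
\]
This is a metric. Since all norms on $\mathbb{R}^{q+1}$ are equivalent, it is equivalent to $\mathcal{D}_0$, with constants depending only on $q$ and $\theta$. In particular $(X,\mathcal{D}_\theta)$ is complete, and compact sets and Hausdorff convergence are the same for both metrics.

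Next, estimate the maps. Write $\mu=\max_{k,i_k}\mu_{k,i_k}<1$ and $\mathcal{M}=\max_k\mathcal{M}_k$. Using \eqref{affine} and \eqref{rakotch},
\[
\mathcal{D}_\theta\big(\Omega_{i_1\cdots i_q}(P),\Omega_{i_1\cdots i_q}(Q)\big)\le (\mu+\theta\mathcal{M})\sum_{k}\lvert x_k-y_k\rvert+\theta\,\phi(\lvert z^*-z^{**}\rvert).
\]
Choose $\theta$ so small that $\mu+\theta\mathcal{M}=:c<1$. Set $a=\sum_k\lvert x_k-y_k\rvert$ and $b=\theta\lvert z^*-z^{**}\rvert$, so that $t=a+b=\mathcal{D}_\theta(P,Q)$. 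Define $\psi(s)=\theta\phi(s/\theta)$; this is again non-decreasing, and $\psi(s)/s=\phi(s/\theta)/(s/\theta)$ is non-increasing and $<1$. The right-hand side is then $ca+\psi(b)$.

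Now set
\[
\Phi(t)=\sup\{ca+\psi(b):\ a,b\ge 0,\ a+b=t\}.
\]
The main task is to check that $\Phi$ is a Rakotch function, i.e.\ that $\Phi(t)/t<1$ and $\Phi(t)/t$ is non-increasing. Write $\beta(s)=\psi(s)/s$ and $\gamma(t)=\max\{c,\beta(t)\}$. Since $\beta$ is non-increasing, $\psi(b)=\beta(b)b$ with $\beta(b)\ge\beta(t)$ for $b\le t$, so the naive bound $ca+\beta(b)b\le\gamma(t)t$ fails. This is the main obstacle, and the fix is to replace $\psi$ by its least concave majorant, or simply invoke Remark~\ref{concave}: $\psi$ may be taken strictly increasing and concave with $\psi(0)=0$. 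Then $ca+\psi(b)$, maximized over $a+b=t$, is a sup of concave functions of $t$ and hence concave in $t$. Also $\Phi(0)=0$ and $\Phi(t)<t$, since $c<1$ and $\psi(b)<b$; the sup is attained by compactness, because $\psi$ is continuous as a concave function on $(0,\infty)$, and continuity at $0$ comes from $\psi(s)\le s$. A concave $\Phi$ with $\Phi(0)=0$ has $\Phi(t)/t$ non-increasing. So $\Phi$ is a Rakotch function, and each $\Omega_{i_1\cdots i_q}$ is a Rakotch, hence Matkowski, contraction with respect to $\mathcal{D}_\theta$.

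For the attractor, Theorem~\ref{attractor} gives a unique nonempty compact $\mathcal{G}$ with $\mathcal{G}=\bigcup\Omega_{i_1\cdots i_q}(\mathcal{G})$. Under \eqref{rakotch}, $F_{i_1\cdots i_q}$ satisfies the hypothesis of Theorem~\ref{Matkowski} in the last variable, because a Rakotch $\phi$ is Matkowski. That theorem therefore yields a continuous fixed point $f^*$ of $T$ whose graph $\mathcal{G}(f^*)$ is compact and invariant. Uniqueness of the invariant compact set then forces $\mathcal{G}=\mathcal{G}(f^*)$.
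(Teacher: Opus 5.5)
Your overall plan (renorm with $\mathcal{D}_\theta$, show each $\Omega_{i_1\cdots i_q}$ is Rakotch, then use Theorem~\ref{attractor}, Theorem~\ref{Matkowski} and uniqueness) is the paper's. You handle the key estimate differently, though, and two of your justifications need repair.

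\textbf{Comparison with the paper.} The paper does not concavify anything; it only uses that $\phi$ is non-decreasing. With $S=\sum_k\lvert x_k-y_k\rvert+\lvert z^*-z^{**}\rvert$ it writes
$\theta\phi(\lvert z^*-z^{**}\rvert)\le\theta\frac{\phi(S)}{S}S\le\theta\sum_k\lvert x_k-y_k\rvert+\frac{\phi(S)}{S}\,\theta\lvert z^*-z^{**}\rvert$.
This costs an extra $\theta$ in the $x$-coefficients, which is why $\theta<1$ is chosen with $\mu_{k,i_k}+\theta\mathcal{M}_k+\theta<1$. It then uses $\theta<1\Rightarrow S\ge\mathcal{D}_\theta$ and the monotonicity of $\phi(t)/t$. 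The result is the explicit Rakotch function $\max\{\max_k(\mu_{k,i_k}+\theta\mathcal{M}_k+\theta),\,\phi(t)/t\}$.

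In your notation, for $\theta\le1$,
$\psi(b)\le\psi(\theta a+b)\le\theta a+\frac{\phi(a+b/\theta)}{a+b/\theta}\,b\le\theta a+\frac{\phi(t)}{t}\,b$,
so $ca+\psi(b)\le\max\{c+\theta,\phi(t)/t\}\,t$. The obstacle you identify therefore only affects the particular $\gamma$ you tried, and it disappears once $c$ is enlarged to $c+\theta$. Your concavification route is heavier. In exchange, it only needs $\mu+\theta\mathcal{M}<1$ and it yields a concave comparison function directly, which is the form used later in Section~\ref{sec5}.

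\textbf{Two steps to repair if you keep your route.}

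First, ``a sup of concave functions is concave'' is false in general. For fixed $b$ your functions $t\mapsto c(t-b)+\psi(b)$ are affine, and a sup of affine functions is convex. The correct reason is that $(t,b)\mapsto c(t-b)+\psi(b)$ is jointly concave on the convex set $\{0\le b\le t\}$, so its partial supremum over $b$ is concave. Concretely, if $b_1,b_2$ are maximizers at $t_1,t_2$, test with $b=\lambda b_1+(1-\lambda)b_2$.

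Second, you need the concave replacement of $\psi$ to stay strictly below the identity, and this is not automatic for a concave majorant. Remark~\ref{concave} is a statement about maps (and as printed it omits $\phi(t)<t$), so you should state and prove the function-level fact you use. The proof is short. Let $\sum_i\lambda_is_i=t$ be a convex combination. Then $\sum_{s_i\ge t/2}\lambda_is_i\ge t/2$, and for those indices $s_i-\psi(s_i)\ge\bigl(1-\tfrac{\psi(t/2)}{t/2}\bigr)s_i$. Hence the least concave majorant satisfies $\hat\psi(t)\le t-\tfrac t2\bigl(1-\tfrac{\psi(t/2)}{t/2}\bigr)<t$, and $\hat\psi(0)=0$.

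With these two repairs your proof is complete.
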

	\begin{proof}
		Define a map $\mathcal{D}_{\theta}: X\rightarrow \mathbb{R}$ such that
		\[\mathcal{D}_{\theta}\left((x_1,\ldots,x_q,z^*),(y_1,\ldots,y_q,z^{**})\right)=\lvert x_1-y_1\rvert+\cdots +\lvert x_q-y_q \rvert +\theta \lvert z^*-z^{**}\rvert\]
		for all $(x_1,\ldots,x_q,z^*), (y_1,\ldots,y_q,z^{**})\in X,$ where $\theta$ is a positive real number. It is easy to note that $\mathcal{D}_{\theta}$ is a metric defined on $X$ and it is equivalent to the Euclidean metric $\mathcal{D}_{0}$ defined on $X$. Then, for all $(x_1,\ldots,x_q,z^*),(y_1,\ldots,y_q,z^{**})\in X$, we have
		{\footnotesize{
				\begin{align*}
					&~\mathcal{D}_{\theta}\left(\Omega_{i_1\cdots i_q}(x_1,\ldots,x_q,z^*),\Omega_{i_1\cdots i_q}(y_1,\ldots,y_q,z^{**})\right)\\
					=&~ \mathcal{D}_{\theta} \left(\left(u_{1,i_1}(x_1),\ldots, u_{q,i_q}(x_q), F_{i_1\cdots i_q}(x_1,\ldots,x_q,z^{*})\right), \left(u_{1,i_1}(y_1),\ldots,u_{q,i_q}(y_q), F_{i_1\cdots i_q}(y_1,\ldots,y_q,z^{**})\right)\right)\\
					=&~ \left\lvert u_{1,i_1}(x_1)-u_{1,i_1}(y_1) \right\rvert +\cdots + \left\lvert u_{q,i_q}(x_q)-u_{q,i_q}(y_q) \right\rvert + \left \lvert F_{i_1\cdots i_q}(x_1,\ldots,x_q,z^{*})- F_{i_1\cdots i_q}(y_1,\ldots,y_q,z^{**})\right\rvert\\
					=&~\lvert \mu_{1,i_1}\rvert \lvert x_1-y_1 \rvert +\cdots + \lvert \mu_{q,i_q}\rvert \lvert x_q-y_q\rvert+ \theta \left(\mathcal{M}_1\lvert x_1-y_1\rvert +\cdots + \mathcal{M}_q\lvert x_q-y_q \rvert +\phi \left(z^*-z^{**}\right)\right)\\
					=&~\left(\lvert \mu_{1,i_1}\rvert +\theta \mathcal{M}_{1}\right)\lvert x_1-y_1\rvert+\cdots +\left(\lvert \mu_{q,i_q}\rvert +\theta \mathcal{M}_{q}\right)\lvert x_q-y_q \rvert +\theta \phi \left(\lvert z^*-z^{**}\rvert\right).
		\end{align*}}}
		
		\noindent
		Since $\phi :(0,\infty)\rightarrow (0,\infty)$ is a non-decreasing function and $\phi(t)<t$ for all $t>0$, then for $(x_1,\ldots,x_q,z^*)\neq (y_1,\ldots,y_q,z^{**})$, we have
		{\footnotesize{
				\begin{align*}
					&~ \mathcal{D}_{\theta}\left(\Omega_{i_1\cdots i_q}(x_1,\ldots,x_q,z^*), \Omega_{i_1\cdots i_q}(y_1,\ldots,y_q,z^{**})\right)\\
					\leq &~ \left(\lvert \mu_{1,i_1}\rvert +\theta \mathcal{M}_1\right)\lvert x_1-y_1\rvert +\cdots +\left(\lvert \mu_{q,i_q}\rvert +\theta \mathcal{M}_q\right)\lvert x_q-y_q\rvert+ \theta \tfrac{\phi(\lvert z^*-z^{**} \rvert)}{\lvert x_1-y_1\rvert +\cdots +\lvert x_q-y_q\rvert+\lvert z^*-z^{**}\rvert}\\
					&\hspace{8cm}\left(\lvert x_1-y_1\rvert+\cdots +\lvert x_q-y_q\rvert+\lvert z^*-z^{**}\rvert\right)\\
					\leq &~ \left(\lvert \mu_{1,i_1}\rvert +\theta \mathcal{M}_{1}+\theta \tfrac{\phi(\lvert x_1-y_1\rvert +\cdots +\lvert x_q-y_q\rvert+\lvert z^*-z^{**}\rvert)}{\lvert x_1-y_1\rvert +\cdots +\lvert x_q-y_q\rvert+\lvert z^*-z^{**}\rvert}\right)\lvert x_1-y_1 \rvert+\cdots+ \Big(\lvert \mu_{q,i_q}\rvert +\theta \mathcal{M}_{q}\\
					&~\hspace{2cm}\left.+\theta \tfrac{\phi(\lvert x_1-y_1\rvert +\cdots +\lvert x_q-y_q\rvert+\lvert z^*-z^{**}\rvert)}{\lvert x_1-y_1\rvert +\cdots +\lvert x_q-y_q\rvert+\lvert z^*-z^{**}\rvert}\right)\lvert x_q-y_q \rvert \theta \tfrac{\phi(\lvert x_1-y_1\rvert +\cdots +\lvert x_q-y_q\rvert+\lvert z^*-z^{**}\rvert)}{\lvert x_1-y_1\rvert +\cdots +\lvert x_q-y_q\rvert+\lvert z^*-z^{**}\rvert}\lvert z^*-z^{**} \rvert\\
					\leq&~ \left(\lvert \mu_{1,i_1}\rvert +\theta \mathcal{M}_{1}+\theta\right)\lvert x_1-y_1 \rvert +\cdots + \left(\lvert \mu_{q,i_q}\rvert +\theta \mathcal{M}_{q}+\theta\right)\lvert x_q-y_q \rvert + \theta \tfrac{\phi(\lvert x_1-y_1\rvert +\cdots +\lvert x_q-y_q\rvert+\lvert z^*-z^{**}\rvert)}{\lvert x_1-y_1\rvert +\cdots +\lvert x_q-y_q\rvert+\lvert z^*-z^{**}\rvert}\lvert z^*-z^{**} \rvert\\
					\leq &~ \max \left\{(\lvert \mu_{1,i_1}\rvert+\theta \mathcal{M}_{1}+\theta), \ldots, (\lvert \mu_{q,i_q}\rvert+\theta \mathcal{M}_{q}+\theta),  (\tfrac{\phi(\lvert x_1-y_1\rvert +\cdots +\lvert x_q-y_q\rvert+\lvert z^*-z^{**}\rvert)}{\lvert x_1-y_1\rvert +\cdots +\lvert x_q-y_q\rvert+\lvert z^*-z^{**}\rvert})\right\}\\
					&\hspace{7cm}\left(\lvert x_1-y_1\rvert +\cdots +\lvert x_q-y_q\rvert +\theta \lvert z^*-z^{**}\rvert\right).
		\end{align*}}}
		
		\noindent
		Let us take $\theta=\min_{j\in \Sigma_q}\left\{\frac{1-\max_{i_j\in \Sigma_{N_j}}\lvert \mu_{j,i_j}\rvert}{2(\mathcal{M}_{j}+1)} \right\}$, then with the assumption that $0<\lvert \mu_{j,i_j}\rvert<1$ for all $i_j\in \Sigma_{N_j}$, where $j\in \Sigma_q$, we have $0<\theta <1$ and $0<\max_{i_j\in \Sigma_{N_j}}\lvert \mu_{j,i_j}\rvert +\theta \mathcal{M}_j+\theta<1$.\\
		Since $t\to \frac{\phi(t)}{t}$ is a non-increasing function and $0<\theta<1$, we have
		{\footnotesize{
				\begin{align*}
					&~\mathcal{D_{\theta}}\left(\Omega_{i_1\cdots i_q}(x_1,\ldots,x_q,z^*), \Omega_{i_1\cdots i_q}(y_1,\ldots,y_q,z^{**})\right)\\
					\leq &~ \max \left\{(\lvert \mu_{1,i_1}\rvert +\theta\mathcal{M}_{1}+\theta),\ldots, (\lvert \mu_{q,i_q}\rvert+\theta \mathcal{M}_{q}+\theta), (\tfrac{\phi(\lvert x_1-y_1\rvert +\cdots +\lvert x_q-y_q\rvert+\lvert z^*-z^{**}\rvert)}{\lvert x_1-y_1\rvert +\cdots +\lvert x_q-y_q\rvert+\lvert z^*-z^{**}\rvert})\right\}\\
					&\hspace{7cm}\left(\lvert x_1-y_1\rvert +\cdots +\lvert x_q-y_q\rvert +\theta \lvert z^*-z^{**}\rvert\right)\\
					\leq &~ \max\left\{(\lvert \mu_{1,i_1}\rvert +\theta\mathcal{M}_{1}+\theta),\ldots, (\lvert \mu_{q,i_q}\rvert+\theta \mathcal{M}_{q}+\theta), (\tfrac{\phi(\lvert x_1-y_1\rvert +\cdots +\lvert x_q-y_q\rvert+\lvert z^*-z^{**}\rvert)}{\lvert x_1-y_1\rvert +\cdots +\lvert x_q-y_q\rvert+\lvert z^*-z^{**}\rvert})\right\}\\
					&\hspace{7cm}\mathcal{D}_{\theta}\left((x_1,\ldots,x_q,z^*),(y_1,\ldots,y_q,z^{**})\right)\\
					\leq&~\max\left\{(\lvert \mu_{1,i_1}\rvert +\theta\mathcal{M}_{1}+\theta),\ldots, (\lvert \mu_{q,i_q}\rvert+\theta \mathcal{M}_{q}+\theta), (\tfrac{\phi\left(\mathcal{D}_{\theta}\left((x_1,\ldots,x_q,z^*),(y_1,\ldots,y_q,z^{**})\right)\right)}{\mathcal{D}_{\theta}\left((x_1,\ldots,x_q,z^*),(y_1,\ldots,y_q,z^{**})\right)})\right\}\\
					&\hspace{7cm}\mathcal{D}_{\theta}\left((x_1,\ldots,x_q,z^*),(y_1,\ldots,y_q,z^{**})\right).
		\end{align*}}}
		
		\noindent
		Let $\beta(t)=\max\left\{(\lvert \mu_{1,i_1}\rvert +\theta\mathcal{M}_{1}+\theta),\ldots, (\lvert \mu_{q,i_q}\rvert+\theta \mathcal{M}_{q}+\theta), (\tfrac{\phi(t)}{t})\right\}$ for all $t>0$. Then, notice that $\beta: (0,\infty)\rightarrow [0,1)$ is a non-increasing function and for each $(x_1,\ldots,x_q,z^*),(y_1,\ldots,y_q,z^{**})\in X$, we have
		\begin{align*}
			&\mathcal{D_{\theta}}\left(\Omega_{i_1\cdots i_q}(x_1,\ldots,x_q,z^{*}),\Omega_{i_1\cdots i_q}(y_1,\ldots,y_q,z^{**})\right)\\
			\leq&~ \beta\left(\mathcal{D}_{\theta}(x_1,\ldots,x_q,z^*),(y_1,\ldots,y_q,z^{**})\right)\mathcal{D}_{\theta}\left((x_1,\ldots,x_q,z^*),(y_1,\ldots,y_q,z^{**})\right).
		\end{align*}
		Hence, for each $(i_1,\ldots,i_q)\in \prod_{k=1}^{q}\Sigma_{N_k}$, $\Omega_{i_1\cdots i_q}:X\rightarrow X$ is a Rakotch contraction map in $(X, \mathcal{D}_{\theta})$. Further, since $\mathcal{D}_{\theta}$ is equivalent to the Euclidean metric $\mathcal{D}_{0}$, hence $(X, \mathcal{D}_{\theta})$ is also a complete metric space. Therefore, by using Theorem \ref{attractor}, for the complete metric space $\left(X, \mathcal{D}_{\theta}\right)$ there is a unique non-empty compact set $\mathcal{G}\subset X$ such that 
		\[\mathcal{G}=\bigcup_{i_1=1}^{N_1}\cdots \bigcup_{i_q=1}^{N_q}\Omega_{i_1\cdots i_q}(\mathcal{G}),\]
		and for any $\mathcal{A}\in \mathcal{K}(X)$, $\mathcal{W}^{n}(\mathcal{A})$ converges to $\mathcal{G}\in \mathcal{K}(X)$ as $n\to \infty$ with respect to the Hausdorff metric $\mathcal{H}_{\mathcal{D}_{\theta}}$, where $\mathcal{W}(\mathcal{A})=\bigcup_{i_1=1}^{N_1}\cdots \bigcup_{i_q=1}^{N_q}\Omega_{i_1\cdots i_q}(\mathcal{A})$.
		
		\noindent
		Since $\mathcal{D}_{0}$ and $\mathcal{D}_{\theta}$ are equivalent, therefore $\mathcal{H}_{\mathcal{D}_{0}}$ and  $\mathcal{H}_{\mathcal{D}_{\theta}}$ are equivalent. Hence, for any $\mathcal{A}\in \mathcal{K}(X)$, $\mathcal{W}^n(\mathcal{A})$ converges to $\mathcal{G}\in \mathcal{K}(X)$ as $n\to \infty$ with respect to the Hausdorff metric $\mathcal{H}_{\mathcal{D}_0}$.
		
		\noindent
		Now by Theorem \ref{Matkowski}, $T$ has a fixed point $f^*$ such that 
		\[\mathcal{G}(f^*)=\bigcup_{i_1=1}^{N_1}\cdots \bigcup_{i_q=1}^{N_q}\Omega_{i_1\cdots i_q}(\mathcal{G}(f^*)).\]
		From the uniqueness of $\mathcal{G}$, it follows that $\mathcal{G}$ must be the $\mathcal{G}(f^*)$. This completes the proof.
	\end{proof}
	\section{Generalized Multivariate $\alpha$-Fractal Functions}\label{sec4}
	In this section, we discuss the multivariate $\alpha$-fractal functions constructed by using generalized IFS. Let $I^q,C(I^q),\Delta$ be defined as in Section~\ref{sec2}. For each $i_k \in \sum_{M_k}$, set $I_{k,i_k}=[x_{k,i_{k-1}},x_{k,i_k}]$. Let $\alpha_{i_k} = (\alpha_{1,i_1}, \cdots,\alpha_{q,i_q})$ be  a scaling vector such that $|\alpha_{i_k}| < 1$ for all $i_k\in \sum_{M_k}$. Let us consider a continuous function $b \in C^*(I^q)$ such that $b:I^q \rightarrow \mathbb{R}$ and $b \neq f$ satisfying  $b(x_{1,k_1},\cdots,x_{q,k_q})=f(x_{1,k_1},\cdots,x_{q,k_q}) $ for all $(k_1,\cdots,k_q) \in \prod_{j=1}^{q}\partial \sum_{N_j,0}$. The function $b$ is called the base function. \\
	\subsection{ Multivariate $\alpha$-Fractal Functions for Matkowski contraction}
	Let $K$ be a closed interval in $\mathbb{R}$ such that $f(I^q)\subset K$. For each $(i_1,\cdots,i_q) \in \prod_{k=1}^{q}  \sum_{N_k}$, where $k\in\sum_{q}$, we define a continuous mapping $F_{i_1,\cdots,i_q}: I^q \times K \rightarrow K$ by
	\begin{align*}
		F_{i_1,\cdots,i_q}((x_{1,k_1},\cdots,x_{q,k_q}),f(x_{1,k_1},\cdots,x_{q,k_q}))=& r_{i_1,\cdots,i_q}((x_{1,k_1},\cdots,x_{q,k_q}),f(x_{1,k_1},\cdots,x_{q,k_q}))\\&+f_{i_1,\cdots,i_q}(u_{1,i_1}(x_1),\cdots,u_{q,i_q}(x_q))\\
		&-r_{i_1,\cdots,i_q}((x_{1,k_1},\cdots,x_{q,k_q}),b(x_{1,k_1},\cdots,x_{q,k_q}),
	\end{align*}
	where $f_{i_1,\cdots,i_q}$ is a given continuous function, $r_{i_1,\cdots,i_q}: I^q \times K \rightarrow \mathbb{R}$ is a continuous map and $r_{i_1,\cdots,i_q}$ is a Matkowski contraction such that 
	\[|r_{i_1,\cdots,i_q}(x_1,\cdots,x_q , y)-r_{i_1,\cdots,i_q}(x_1,\cdots,x_q , y^*) | \leq \phi (|y-y^*|),\]
	where $\phi $ is a non-decreasing function and for every $t>0$, $\phi^{n}(t)\to 0$ as $n\to \infty$. For each  $(i_1,\cdots,i_q) \in \prod_{k=1}^{q}   \sum_{N_k}$, we define a map $W_{i_1,\cdots,i_q}: I^q \times K \rightarrow I^q \times K$  as follows
	\begin{align*}
		W_{i_1,\cdots,i_q}((x_{1,k_1},\cdots,x_{q,k_q}),&f(x_{1,k_1},\cdots,x_{q,k_q})) =\\& (u_{1,i_1}(x_1),\cdots,u_{q,i_q}(x_q)),F_{i_1,\cdots,i_q}((x_{1,k_1},\cdots,x_{q,k_q}),f(x_{1,k_1},\cdots,x_{q,k_q}))).
	\end{align*}
	Then, $\mathbb{I}=\{I^q \times K ;W_{i_1,\cdots,i_q} :(i_1,\cdots,i_q) \in \prod_{k=1}^{q}   \sum_{N_k} \}$ is an IFS.
	\begin{theorem}\label{alphaMatkowski}
		For each  $(i_1,\cdots,i_q) \in \prod_{k=1}^{q}\sum_{N_k}$, let $r_{i_1,\cdots,i_q}$ be Matkowski contraction and IFS  $\mathbb{I}$ be defined as above. Then, there exist a unique continuous function $f^{\phi }_{\Delta,b} :I^q \rightarrow K $ such that $f^{\phi} _{\Delta,b}(x_{1,k_1},\cdots,x_{q,k_q}  )=f(x_{1,k_1},\cdots,x_{q,k_q}) $ for each  $(i_1,\cdots,i_q) \in \prod_{k=1}^{q}   \sum_{N_k}$ and the function $f^{\phi} _{\Delta,b}$ satisfies the following self-referential equation
		\begin{align*}
			f^{\phi} _{\Delta,b}(u_{1,i_1}(x_1),\cdots,u_{q,i_q}(x_q))=& f_{i_1,\cdots,i_q}(u_{1,i_1}(x_1),\cdots,u_{1,i_1}(x_1))\\
			&+r_{i_1,\cdots,i_q}((x_{1,k_1},\cdots,x_{q,k_q}),f^{\phi} _{\Delta,b}(x_{1,k_1},\cdots,x_{q,k_q}))\\
			&-r_{i_1,\cdots,i_q}((x_{1,k_1},\cdots,x_{q,k_q}),b(x_{1,k_1},\cdots,x_{q,k_q}))
		\end{align*}
		for all $(x_{1,k_1},\cdots,x_{q,k_q})\in I^q$ and  $(i_1,\cdots,i_q) \in \prod_{k=1}^{q}   \sum_{N_k}$. Furthermore , the graph $G_{f^{\alpha} _{\Delta,b}}(I^q)$ of $f^{\alpha} _{\Delta,b}$ is the attractor of the IFS $\mathbb{I}$.
	\end{theorem}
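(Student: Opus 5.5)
The plan is to realise $f^{\phi}_{\Delta,b}$ as the unique fixed point of a Read--Bajraktarević operator on a complete space of continuous functions. We then read off interpolation and the self-referential equation from the fixed-point identity, and get the attractor statement exactly as in Theorem~\ref{Matkowski} and Theorem~\ref{rakotchthoerem}.

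\textbf{The space and operator.} Let $\mathcal{C}_f(I^q)$ be the set of $g\in\mathcal{C}(I^q)$ with $g(I^q)\subset K$ and $g=f$ at the vertices $(x_{1,k_1},\ldots,x_{q,k_q})$, $(k_1,\ldots,k_q)\in\prod_{j}\partial\Sigma_{N_j,0}$. This is a closed subset of $(\mathcal{C}(I^q),d_{\mathcal{C}})$, hence complete. For $(x_1,\ldots,x_q)\in\prod_k I_{k,i_k}$ define
\[
T g(x_1,\ldots,x_q)=f_{i_1\cdots i_q}(x_1,\ldots,x_q)+r_{i_1\cdots i_q}\bigl(u^{-1}(x),g(u^{-1}(x))\bigr)-r_{i_1\cdots i_q}\bigl(u^{-1}(x),b(u^{-1}(x))\bigr),
\]
where $u^{-1}(x)=(u_{1,i_1}^{-1}(x_1),\ldots,u_{q,i_q}^{-1}(x_q))$. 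This is precisely $F_{i_1\cdots i_q}(u^{-1}(x),g(u^{-1}(x)))$, so it has the form \eqref{Tf}.

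\textbf{Well-definedness.} We need $Tg$ to be single-valued and continuous on the common faces of adjacent subboxes, and to interpolate. At a grid point we use $g=b=f$ at the vertices, so the two $r$-terms cancel and $Tg$ equals $f_{i_1\cdots i_q}$ there. This reduces to the join-up conditions on $f_{i_1\cdots i_q}$, namely $f_{i_1\cdots i_q}=f$ on the grid, which we take as the standing hypothesis implicit in the construction (in the classical case $f_{i_1\cdots i_q}=f$). The case analysis is then the one in Lemma~\ref{operator}.

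Matching on entire common faces, not just at nodes, is the main obstacle in the multivariate setting. For $q\ge 2$ the faces are not finite sets, so agreement at nodes does not suffice. I would first try to impose, as in the multivariate literature, that $g$, $b$ and $f$ agree on $\partial I^q$, and that $r_{i_1\cdots i_q}$ and $f_{i_1\cdots i_q}$ are compatible across shared faces. This makes the two $r$-terms cancel on the boundary, so $Tg=f_{i_1\cdots i_q}=f$ on the faces, which is consistent from both sides. Failing that, I would work, as the paper does, only with the node conditions and continuity on each closed subbox. We also need $Tg(I^q)\subset K$, which follows from $F_{i_1\cdots i_q}$ mapping into $K$.

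\textbf{Contraction and fixed point.} For $g,h\in\mathcal{C}_f(I^q)$ the $b$- and $f_{i_1\cdots i_q}$-terms cancel, so
\[
|Tg-Th|(x)\le \phi\bigl(|g(u^{-1}x)-h(u^{-1}x)|\bigr)\le\phi(d_{\mathcal{C}}(g,h)),
\]
using that $\phi$ is non-decreasing, exactly as in \eqref{eq3.5}--\eqref{eq3.6}. Hence $T$ is a Matkowski contraction. By Matkowski's fixed point theorem (the single-map case underlying Theorem~\ref{attractor}) it has a unique fixed point $f^{\phi}_{\Delta,b}$. Substituting $x\mapsto u(x)$ in $Tf^{\phi}_{\Delta,b}=f^{\phi}_{\Delta,b}$ yields the self-referential equation, and membership in $\mathcal{C}_f$ gives the interpolation property.

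\textbf{Attractor.} The same computation as in Theorem~\ref{Matkowski} gives $W_{i_1\cdots i_q}(\mathcal{G}(f^{\phi}_{\Delta,b}))=\mathcal{G}(f^{\phi}_{\Delta,b}|_{\prod_k I_{k,i_k}})$, and taking the union gives invariance. To identify $\mathcal{G}$ with the unique attractor, note that $I^q\times K$ is compact. If additionally $r_{i_1\cdots i_q}$ is Lipschitz in the first $q$ variables, then the weighted metric $\mathcal{D}_\theta$ from Theorem~\ref{rakotchthoerem} makes each $W_{i_1\cdots i_q}$ a Matkowski contraction, with $\beta$ replaced by $\max\{\mu+\theta\mathcal{M}+\theta,\ \phi(t)/t\}$-type control. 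Theorem~\ref{attractor} then gives uniqueness of the invariant compact set, which must therefore be the graph.
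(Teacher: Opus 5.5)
Your construction is the paper's: the same Read--Bajraktarevi\'c operator on continuous $K$-valued functions agreeing with $f$ at the corners, the same estimate $d_{\mathcal{C}}(Tg,Th)\le\phi(d_{\mathcal{C}}(g,h))$ from the monotonicity of $\phi$, a Matkowski fixed point, the self-referential equation by substitution, and invariance of the graph by the computation of Theorem~\ref{Matkowski}. The paper's proof stops at this invariance. Two of your choices improve on it. First, writing $f_{i_1\cdots i_q}$ in the operator (the paper writes $f$) is what actually matches $F_{i_1\cdots i_q}$ and the stated equation. Second, your worry about faces is justified: the paper, like Lemma~\ref{operator}, checks only nodes and continuity on each closed subbox, and for $q\ge2$ that does not make $Tg$ well defined. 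Some extra hypothesis is unavoidable. Take $q=2$, $N_1=2$, $f_{i_1i_2}=f$, $r_{2,i_2}\equiv0$ and $r_{1,i_2}(x,z)=\alpha z$ with $\alpha\neq0$. The equations with $i_1=2$ force $f^*=f$ on $[x_{1,1},1]\times I$. The equations with $i_1=1$, evaluated at $x_1=1$, then force $b(1,\cdot)=f(1,\cdot)$, which the corner condition does not give. Your first remedy works and needs less than you impose. Assume $b=f$ on $\partial I^q$ and work in $\{g: g(I^q)\subset K,\ g=f \text{ on } \partial I^q\}$. Since $u^{-1}$ maps every face of every subbox into $\partial I^q$, the $r$-terms cancel there and $Tg=f_{i_1\cdots i_q}$. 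So you only need $f_{i_1\cdots i_q}=f$ on those faces, with no condition on $r$; this also gives $Tg=f$ on $\partial I^q$, so the space is invariant. Drop the fallback ``as the paper does''.

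The genuine gap is your uniqueness step for the attractor. The metric $\mathcal{D}_\theta$ of Theorem~\ref{rakotchthoerem} does not transfer to the Matkowski case. Write $s=\sum_k|x_k-y_k|$ and $w=|z^*-z^{**}|$, so that $s+w\ge\mathcal{D}_\theta$. The last inequality of that proof replaces $\phi(s+w)/(s+w)$ by $\phi(\mathcal{D}_\theta)/\mathcal{D}_\theta$, and this uses exactly the Rakotch hypothesis that $\phi(t)/t$ is non-increasing. Consider the Matkowski function with $\phi\equiv1$ on $(1,2]$ and $\phi(t)=t/2$ elsewhere. For $t\in(\theta,1)$ one has $\sup_{t\le u\le t/\theta}\phi(u)/u=1$, so the chain gives no bound below $t$. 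Also, Lipschitz continuity of $r$ in $x$ does not make $F_{i_1\cdots i_q}$ Lipschitz in $x$, because of $f_{i_1\cdots i_q}\circ u$ and $x\mapsto r(x,b(x))$ with $b$ merely continuous.

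None of this is needed. Write $f^*=f^{\phi}_{\Delta,b}$, let $\mathcal{G}$ be its graph, and set $u_\sigma=(u_{1,i_1},\ldots,u_{q,i_q})$. Let $A\subset I^q\times K$ be compact with $A=\bigcup W_{i_1\cdots i_q}(A)$. Each $(x,z)\in A$ equals $W_\sigma(x_1,z_1)$ for some $\sigma$ and some $(x_1,z_1)\in A$. Since $x=u_\sigma(x_1)$ and $f^*(u_\sigma(x_1))=F_\sigma(x_1,f^*(x_1))$, you get $|z-f^*(x)|\le\phi(|z_1-f^*(x_1)|)$. Iterating and using monotonicity gives $|z-f^*(x)|\le\phi^n(D)\to0$, where $D=\max_{(y,w)\in A}|w-f^*(y)|$. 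Hence $A\subseteq\mathcal{G}$. The projection of $A$ is a nonempty compact set invariant under the contractive IFS $\{u_\sigma\}$, whose attractor is $I^q$, so $A=\mathcal{G}$. Running the same estimate forward, together with uniform continuity of $f^*$, gives $\mathcal{W}^n(B)\to\mathcal{G}$ for every compact $B\subset I^q\times K$, with no Lipschitz assumption in $x$.
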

	\begin{proof}
		First, we define a complete metric space
		\begin{align*}
			\mathcal{C}^*(I^q) =\bigg\{g :I^q \rightarrow K | \text{g is a continuous map and } g(x_{1,k_1},\ldots, x_{q,k_q})=f(x_{1,k_1},\ldots, x_{q,k_q}) \\ \text{ for } (k_1,\ldots, k_q)\in \overset{q}{\prod_{\substack{j=1}}} \partial \Sigma_{N_j,0}\bigg\}.
		\end{align*}
		It is easy to observe that $(\mathcal{C}^*(I^q),d_{\mathbb{C}})$ is a complete metric space. Now, we define the RB operator $\mathbb{T}:\mathcal{C}^*(I^q) \rightarrow \mathcal{C}^*(I^q) $ by 
		\begin{align*}
			\mathbb{T}g(x_1,\ldots,x_q)=&r_{i_1,\cdots,i_q}\left(u_{1,i_1}^{-1}(x_1), \ldots, u_{q,i_q}^{-1}(x_q), g \left(u_{1,i_1}^{-1}(x_1), \ldots, u_{q,i_q}^{-1}(x_q)\right)\right)+ f(x_1,\ldots, x_q)\\
			&-r_{i_1,\cdots,i_q}\left(\left(u_{1,i_1}^{-1}(x_1), \ldots, u_{q,i_q}^{-1}(x_q)\right),b\left(u_{1,i_1}^{-1}(x_1), \ldots, u_{q,i_q}^{-1}(x_q)\right)\right)
		\end{align*}
		for $(x_1,\ldots, x_q) \in \prod_{k=1}^{q}I_{k,i_k}$, where $i_k\in \sum_{N_k}$.
		
		It is not difficult to see that the map $\mathbb{T}$ is well defined and for each $g\in C^*(I^q)$, $\mathbb{T}g(x_{1,k_1},\ldots, x_{q,k_q})=f(x_{1,k_1},\ldots, x_{q,k_q})$ where $(k_1,\ldots,k_q) \in \prod_{i=1}^{q} \sum_{N_i}$. Now we will show that the operator $\mathbb{T}$ is a Matkowski contraction. Let $g,h \in C^*(I^q)$, then we have
		\begin{align*}
			d_c(\mathbb{T}g,\mathbb{T}h)=& \lVert\mathbb{T}g-\mathbb{T}h\rVert_C\\
			=&\sup_{(x_1,\ldots,x_q)\in I^q} \lvert \mathbb{T}g(x_1,\ldots, x_q)-\mathbb{T}h(x_1,\ldots, x_q)\rvert\\
			\leq&\max_{(i_1,\ldots,i_q)\in \overset{q}{\underset{k=1}{\prod}}\Sigma_{N_k}}\sup_{(x_1,\ldots,x_q)\in \overset{q}{\underset{k=1}{\prod}}I_{k,i_k}}\bigg\lvert r_{i_1,\cdots,i_q}\left(u_{1,i_1}^{-1}(x_1), \ldots, u_{q,i_q}^{-1}(x_q),\right. \\
			&\hspace{5.5cm}\left.g \left(u_{1,i_1}^{-1}(x_1), \ldots, u_{q,i_q}^{-1}(x_q)\right)\right) \\
			&- r_{i_1,\cdots,i_q}\left(u_{1,i_1}^{-1}(x_1), \ldots, u_{q,i_q}^{-1}(x_q), h \left(u_{1,i_1}^{-1}(x_1), \ldots, u_{q,i_q}^{-1}(x_q)\right)\right) \bigg\rvert.
		\end{align*}
		Therefore, 
		\begin{align*}
			d_c(\mathbb{T}g,\mathbb{T}h)\leq&\max_{(i_1,\ldots,i_q)\in \overset{q}{\underset{k=1}{\prod}}\Sigma_{N_k}} \phi(d_c(g,h))\\
			=& \Phi (d_c(g,h)),
		\end{align*}
		where $\Phi : \mathbb{R}_+ \rightarrow \mathbb{R}_+$ is given by 
		\[\Phi(t) = \max_{(i_1,\ldots,i_q)\in \overset{q}{\underset{k=1}{\prod}}\Sigma_{N_k}} \phi(t).\]
		Using the definition of $\phi(t)$, it is clear that  $\Phi$ is a non decreasing function and for every $t>0$, $\phi^{n}(t)\to 0$ as $n\to \infty$. This implies that $\mathbb{T}$ is a Matkowski contraction. Thus, by Theorem \ref{Matkowski} , there exist a unique function $f^{\phi} _{\Delta,b}\in \mathcal{C}^* (I^q)$ such that $\mathbb{T}f^{\phi} _{\Delta,b} = f^{\phi} _{\Delta,b}$ and it satisfies the following self-referential equation 
		\begin{align*}
			f^{\phi} _{\Delta,b}(u_{1,i_1}(x_1),\cdots,u_{q,i_q}(x_q))=& f_{i_1,\cdots,i_q}(u_{1,i_1}(x_1),\cdots,u_{q,i_q}(x_q))\\
			&+r_{i_1,\cdots,i_q}((x_{1,k_1},\cdots,x_{q,k_q}),f^{\phi} _{\Delta,b}(x_{1,k_1},\cdots,x_{q,k_q}))\\
			&-r_{i_1,\cdots,i_q}((x_{1,k_1},\cdots,x_{q,k_q}),b(x_{1,k_1},\cdots,x_{q,k_q}))
		\end{align*}
		for all $(x_{1,k_1},\cdots,x_{q,k_q})\in I^q$ and  $(i_1,\cdots,i_q) \in \prod_{k=1}^{q}   \sum_{N_k}$. Using above self-referential equation satisfied by$f^{\phi} _{\Delta,b}$, one can prove that 
		\[{\mathcal{G}_{f^{\alpha} _{\Delta,b}}(I^q)}=\bigcup_{i_1=1}^{N_1}\cdots \bigcup_{i_q=1}^{N_q} W_{i_1\cdots i_q}(\mathcal{G}_{f^{\alpha} _{\Delta,b}}(I^q)).\]
		Thus, the proof is complete.
	\end{proof}
	\subsection{ Multivariate $\alpha$-Fractal Functions for Rakotch Contraction}
	Let $K$ be a closed interval in $\mathbb{R}$ such that $f(I^q)\subset K$. For each $(i_1,\cdots,i_q) \in \prod_{k=1}^{q}  \sum_{N_k}$, where $k\in\sum_{q}$, we define a continuous mapping $F_{i_1,\cdots,i_q}: I^q \times K \rightarrow K$ by
	\begin{align*}
		F_{i_1,\cdots,i_q}((x_{1,k_1},\cdots,x_{q,k_q}),f(x_{1,k_1},\cdots,x_{q,k_q}))=& r_{i_1,\cdots,i_q}((x_{1,k_1},\cdots,x_{q,k_q}),f(x_{1,k_1},\cdots,x_{q,k_q}))\\&+f_{i_1,\cdots,i_q}(u_{1,i_1}(x_1),\cdots,u_{q,i_q}(x_q))\\&- r_{i_1,\cdots,i_q}((x_{1,k_1},\cdots,x_{q,k_q}),b(x_{1,k_1},\cdots,x_{q,k_q}),
	\end{align*}
	where $f_{i_1,\cdots,i_q}$ is a given continuous function, $r_{i_1,\cdots,i_q}: I^q \times K \rightarrow \mathbb{R}$ is a continuous map such that $r_{i_1,\cdots,i_q}$ is a Rakotch contraction satisfying 
	\[|r_{i_1,\cdots,i_q}(x_1,\cdots,x_q , y)-r_{i_1,\cdots,i_q}(x_1,\cdots,x_q , y^*) | \leq \phi (|y-y^*|)\]
	for any $t>0$, $\beta (t)=\frac{\phi(t)}{t}<1$  and the function $\beta$ is a non-increasing function. For each  $(i_1,\cdots,i_q) \in \prod_{k=1}^{q}   \sum_{N_k}$, we define a map $W_{i_1,\cdots,i_q}: I^q \times K \rightarrow I^q \times K$  as follows
	\begin{align*}
		&W_{i_1,\cdots,i_q}((x_{1,k_1},\cdots,x_{q,k_q}),f(x_{1,k_1},\cdots,x_{q,k_q}))\\ =&(u_{1,i_1}(x_1),\cdots,u_{q,i_q}(x_q)),F_{i_1,\cdots,i_q}((x_{1,k_1},\cdots,x_{q,k_q}),f(x_{1,k_1},\cdots,x_{q,k_q}))).
	\end{align*}
	Then, $\mathcal{I}=\{I^q \times K ;W_{i_1,\cdots,i_q} :(i_1,\cdots,i_q) \in \prod_{k=1}^{q}   \sum_{N_k} \}$ is an IFS
	\begin{theorem}\label{alphaRakotch}
		For each  $(i_1,\cdots,i_q) \in \prod_{k=1}^{q}   \sum_{N_k}$, let $r_{i_1,\cdots,i_q}$ be Rakotch contractions. Let the IFS  $\mathcal{I}$ be defined as above. Then there exist a unique continuous function $f^{\phi }_{\Delta,b} :I^q \rightarrow K $ such that $f^{\phi} _{\Delta,b}(x_{1,k_1},\cdots,x_{q,k_q}  )=f(x_{1,k_1},\cdots,x_{q,k_q}) $ for each  $(i_1,\cdots,i_q) \in \prod_{k=1}^{q}   \sum_{N_k}$ and the function $f^{\phi} _{\Delta,b}$ satisfies the following self-referential equation
		\begin{align*}
			f^{\phi} _{\Delta,b}(u_{1,i_1}(x_1),\cdots,u_{q,i_q}(x_q))=& f_{i_1,\cdots,i_q}(u_{1,i_1}(x_1),\cdots,u_{q,i_q}(x_q))\\
			&+r_{i_1,\cdots,i_q}((x_{1,k_1},\cdots,x_{q,k_q}),f^{\phi} _{\Delta,b}(x_{1,k_1},\cdots,x_{q,k_q}))\\
			&-r_{i_1,\cdots,i_q}((x_{1,k_1},\cdots,x_{q,k_q}),b(x_{1,k_1},\cdots,x_{q,k_q}))
		\end{align*}
		for all $(x_{1,k_1},\cdots,x_{q,k_q})\in I^q$ and  $(i_1,\cdots,i_q) \in \prod_{k=1}^{q}   \sum_{N_k}$. Furthermore , the graph of $G_{f^{\alpha} _{\Delta,b}}(I^q)$ is the attractor of the IFS $\mathcal{I}$.
	\end{theorem}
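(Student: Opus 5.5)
The plan is to reduce Theorem \ref{alphaRakotch} to the Matkowski case, Theorem \ref{alphaMatkowski}, and then recover the attractor statement as in the proof of Theorem \ref{rakotchthoerem}.

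\textbf{Step 1 (Rakotch implies Matkowski).} Let $\phi$ be the comparison function of the $r_{i_1,\cdots,i_q}$, with $\beta(t)=\phi(t)/t<1$ non-increasing. Put $\tilde\phi(t)=\sup_{0<s\le t}\phi(s)$, which is non-decreasing and still dominates each $r_{i_1,\cdots,i_q}$. We show $\tilde\phi(t)\le\beta(t)\,t$:
\begin{itemize}
\item for $s\le t$ we have $\phi(s)=\beta(s)s\le \beta(s)t$, and $\beta\le1$;
\item the cleaner route is Remark \ref{concave}, which allows $\phi$ to be taken strictly increasing and concave, so $\tilde\phi=\phi$.
\end{itemize}
Then $\phi^n(t)\le \beta(t)^n t\to0$, since the orbit is decreasing and $\beta$ is non-increasing.

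\textbf{Step 2 (fixed point of the RB operator).} Work on $\mathcal{C}^*(I^q)$ with maps into $K$, and use the operator $\mathbb{T}$ from the proof of Theorem \ref{alphaMatkowski}. Check three things:
\begin{itemize}
\item $\mathbb{T}g$ is well defined and continuous across the common faces of the cells $\prod_k I_{k,i_k}$;
\item $\mathbb{T}g$ interpolates $f$ at the nodes;
\item $d_{\mathcal{C}}(\mathbb{T}g,\mathbb{T}h)\le \phi(d_{\mathcal{C}}(g,h))$.
\end{itemize}
The contraction estimate follows cellwise, exactly as in \eqref{eq3.5}--\eqref{eq3.6}, using monotonicity of $\phi$. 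Matkowski's fixed point theorem on the complete space $\mathcal{C}^*(I^q)$ then gives a unique fixed point $f^{\phi}_{\Delta,b}$. Substituting $x_k\mapsto u_{k,i_k}(x_k)$ in $\mathbb{T}f^{\phi}_{\Delta,b}=f^{\phi}_{\Delta,b}$ yields the self-referential equation.

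\textbf{Step 3 (graph is the attractor).} First, the self-referential equation gives $W_{i_1\cdots i_q}(\mathcal{G})=\mathcal{G}\cap(\prod_k I_{k,i_k}\times K)$, so $\mathcal{G}$ is invariant under the IFS $\mathcal{I}$. For uniqueness, assume the $F_{i_1\cdots i_q}$ are Lipschitz in $(x_1,\ldots,x_q)$ with constants $\mathcal{M}_k$, as in \eqref{rakotch}; the terms $r$ and $f_{i_1\cdots i_q}\circ u$ supply this. Now follow Theorem \ref{rakotchthoerem}:
\begin{itemize}
\item introduce $\mathcal{D}_\theta$ with $\theta$ chosen as there;
\item show each $W_{i_1\cdots i_q}$ is a Rakotch contraction on $(I^q\times K,\mathcal{D}_\theta)$ with comparison ratio $\max\{\mu_{k,i_k}+\theta\mathcal{M}_k+\theta,\ \phi(t)/t\}$, which is non-increasing and $<1$;
\item apply Theorem \ref{attractor} to get a unique attractor, which must equal $\mathcal{G}$ by invariance.
\end{itemize}

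\textbf{Main obstacles.} We expect two. The first is Step 1: making $\phi$ non-decreasing while staying Rakotch, handled via Remark \ref{concave}. The second is the Lipschitz hypothesis in the spatial variables, which is not explicit in the statement. If that hypothesis is unavailable, we would argue directly instead: starting from $\mathcal{G}$-like compacta, show that the Hausdorff distance between $\mathcal{W}^n(A)$ and $\mathcal{G}$ shrinks by using $\mathbb{T}^n$ convergence in the $z$-coordinate.
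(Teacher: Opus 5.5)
Your route is essentially the paper's. The paper also obtains $f^{\phi}_{\Delta,b}$ as a Matkowski fixed point: it cites Theorem~\ref{rakotchthoerem}, whose function part is Theorem~\ref{Matkowski}. It identifies the attractor by making each $W_{i_1\cdots i_q}$ a Rakotch contraction for $\mathcal{D}_\theta$, silently using Lipschitz constants $\mathcal{M}_k$ that the statement never provides.

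\textbf{The genuine gap (shared by the paper) is your first check in Step~2.} For $q\ge 2$, $\mathbb{T}g$ is in general \emph{not} well defined or continuous across interior cell faces when $b$ and $g$ agree with $f$ only at the $2^q$ vertices of $I^q$. Take $r(x,y)=\alpha y$ with $\alpha\neq 0$. On the face $x_1=x_{1,i_1}$, the cells with first index $i_1$ and $i_1+1$ give $f(x)+\alpha(g-b)(1,s)$ and $f(x)+\alpha(g-b)(0,s)$ at the same $s=(u_{2,i_2}^{-1}(x_2),\ldots,u_{q,i_q}^{-1}(x_q))$. These differ unless $g-b$ matches on opposite faces of $I^q$.

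Comparing the self-referential equation on both sides of that face, and on the outer faces $x_1=0$ and $x_1=1$, even forces $(b-f)(0,\cdot)=(b-f)(1,\cdot)$. So for a base function violating this, no continuous solution exists, and the statement itself needs a stronger hypothesis.

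The standard fix is to assume $b=f$ on all of $\partial I^q$ and to work in the closed subspace $\{g\in\mathcal{C}(I^q): g=f \text{ on } \partial I^q\}$. Each cell face is sent by $(u_{1,i_1}^{-1},\ldots,u_{q,i_q}^{-1})$ into $\partial I^q$, where $g=b$, so the two $r$-terms cancel and $\mathbb{T}g=f$ on every cell face. Then $\mathbb{T}$ is well defined, continuous and self-mapping, and your contraction estimate goes through.

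\textbf{Step~1.} The bound $\tilde\phi(t)\le\beta(t)t$ is false in general: $\beta(1)=0.9$, $\beta(2)=0.1$ gives $\tilde\phi(2)\ge 0.9>0.2$. The bound $\phi^n(t)\le\beta(t)^n t$ also has the monotonicity backwards, since $\beta(t_n)\ge\beta(t)$ along the decreasing orbit. For instance, $\phi(t)=t/(1+t)$ gives $\phi^n(t)=t/(1+nt)$, which is not geometric. Argue instead that $t_n\downarrow L$; if $L>0$ then $t_{n+1}\le\beta(L)t_n$ for all $n$, so $t_n\to 0$, a contradiction.

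\textbf{Step~3.} Continuity of $f$, $b$ and $r$ does not supply Lipschitz constants. However, your fallback works with no extra hypothesis and is better than the paper's route.
For compact $A\subseteq I^q\times K$ set $\delta(A)=\sup_{(x,z)\in A}|z-f^{\phi}_{\Delta,b}(x)|$. Since $f^{\phi}_{\Delta,b}(u(x))=F_{i_1\cdots i_q}(x,f^{\phi}_{\Delta,b}(x))$ with $u=(u_{1,i_1},\ldots,u_{q,i_q})$, and $\phi$ is non-decreasing, you get $\delta(\mathcal{W}(A))\le\phi(\delta(A))$. An invariant compact $A$ therefore has $\delta(A)=0$, so $A\subseteq\mathcal{G}$. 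Its projection is invariant under the contractive IFS $\{(u_{1,i_1},\ldots,u_{q,i_q})\}$, whose attractor is $I^q$, hence $A=\mathcal{G}$.
Hausdorff convergence $\mathcal{W}^n(A)\to\mathcal{G}$ then follows from three facts: $\delta(\mathcal{W}^n(A))\le\phi^n(\delta(A))\to 0$, the projections converge to $I^q$, and $f^{\phi}_{\Delta,b}$ is uniformly continuous.
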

	\begin{proof}
		Define a map $\mathcal{D}_{\theta}: X\rightarrow \mathbb{R}$ such that
		\[\mathcal{D}_{\theta}\left((x_1,\ldots,x_q,z^*),(y_1,\ldots,y_q,z^{**})\right)=\lvert x_1-y_1\rvert+\cdots +\lvert x_q-y_q \rvert +\theta \lvert z^*-z^{**}\rvert\]
		for all $(x_1,\ldots,x_q,z^*), (y_1,\ldots,y_q,z^{**})\in X,$ where $\theta$ is a positive real number. It is easy to note that $\mathcal{D}_{\theta}$ is a metric defined on $X$ and it is equivalent to the Euclidean metric $\mathcal{D}_{0}$ defined on $X$. Then, for all $(x_1,\ldots,x_q,z^*),(y_1,\ldots,y_q,z^{**})\in X$, such that 
		{\footnotesize{
				\begin{align*}
					&~ \mathcal{D}_{\theta}\left(r_{i_1\cdots i_q}(x_1,\ldots,x_q,f(x_{1,k_1},\cdots,x_{q,k_q})^*), r_{i_1\cdots i_q}(y_1,\ldots,y_q,f(x_{1,k_1},\cdots,x_{q,k_q})^{**})\right)\\
					\leq &~ \max \bigg\{(\lvert \mu_{1,i_1}\rvert+\theta \mathcal{M}_{1}+\theta), \ldots, (\lvert \mu_{q,i_q}\rvert+\theta \mathcal{M}_{q}+\theta)\\
					&\hspace{5cm},  (\tfrac{\phi(\lvert x_1-y_1\rvert +\cdots +\lvert x_q-y_q\rvert+\lvert {f(x_{1,k_1},\cdots,x_{q,k_q})^*}-{f(x_{1,k_1},\cdots,x_{q,k_q})^{**}}\rvert)}{\lvert x_1-y_1\rvert +\cdots +\lvert x_q-y_q\rvert+\lvert {f(x_{1,k_1},\cdots,x_{q,k_q})^*}-{f(x_{1,k_1},\cdots,x_{q,k_q})^{**}}\rvert})\bigg\}\\
					&\hspace{4cm}\left(\lvert x_1-y_1\rvert +\cdots +\lvert x_q-y_q\rvert +\theta \lvert {f(x_{1,k_1},\cdots,x_{q,k_q})^*}-{f(x_{1,k_1},\cdots,x_{q,k_q})^{**}}\rvert\right).\\
					\leq&~ \beta\left(\mathcal{D}_{\theta}(x_1,\ldots,x_q,{f(x_{1,k_1},\cdots,x_{q,k_q})^*}),(y_1,\ldots,y_q,{f(x_{1,k_1},\cdots,x_{q,k_q})^{**}})\right)\\
					&\mathcal{D}_{\theta}\left((x_1,\ldots,x_q,{f(x_{1,k_1},\cdots,x_{q,k_q})^*}),(y_1,\ldots,y_q,{f(x_{1,k_1},\cdots,x_{q,k_q})^{**}})\right).
		\end{align*}}}
		
		\noindent
		Therefore, for each $(i_1,\ldots,i_q)\in \prod_{k=1}^{q}\Sigma_{N_k}$, $W_{i_1\cdots i_q}:X\rightarrow X$ is a Rakotch contraction map in $(X, \mathcal{D}_{\theta})$. Further, since $\mathcal{D}_{\theta}$ is equivalent to the Euclidean metric $\mathcal{D}_{0}$, hence $(X, \mathcal{D}_{\theta})$ is also a complete metric space.
		
		Thus, by Theorem \ref{rakotchthoerem} , there exist a unique function $f^{\phi} _{\Delta,b}\in \mathcal{C}^* (I^q)$ which satisfies the following self-referential equation 
		\begin{align*}
			f^{\phi} _{\Delta,b}(u_{1,i_1}(x_1),\cdots,u_{q,i_q}(x_q))=& f_{i_1,\cdots,i_q}(u_{1,i_1}(x_1),\cdots,u_{q,i_q}(x_q))\\
			&+r_{i_1,\cdots,i_q}((x_{1,k_1},\cdots,x_{q,k_q}),f^{\phi} _{\Delta,b}(x_{1,k_1},\cdots,x_{q,k_q}))\\
			&-r_{i_1,\cdots,i_q}((x_{1,k_1},\cdots,x_{q,k_q}),b(x_{1,k_1},\cdots,x_{q,k_q}))
		\end{align*}
		for all $(x_{1,k_1},\cdots,x_{q,k_q})\in I^q$ and  $(i_1,\cdots,i_q) \in \prod_{k=1}^{q}   \sum_{N_k}$. Using above self-referential equation satisfied by$f^{\phi} _{\Delta,b}$, one can prove that 
		\[{\mathcal{G}_{f^{\alpha} _{\Delta,b}}(I^q)}=\bigcup_{i_1=1}^{N_1}\cdots \bigcup_{i_q=1}^{N_q} W_{i_1\cdots i_q}(\mathcal{G}_{f^{\alpha} _{\Delta,b}}(I^q)).\]
		Thus, the proof is complete.
	\end{proof}
	
	\begin{remark}
		In the above construction, if we consider $r_{i_1,\cdots,i_q}((x_{1,k_1},\cdots,x_{q,k_q}),(y_{1,k_1},\cdots,y_{q,k_q}))=\alpha_{i_k}y_{(y_{1,k_1},\cdots,y_{q,k_q})} $ for all $i_k \in \sum_{M_k}$, where $\lvert \alpha_{i_k} \rvert<1$, then by our construction, we get the $\alpha$-fractal function for the multivariate case. If we take  $r_{i_1,\cdots,i_q}((x_{1,k_1},\cdots,x_{q,k_q}),(y_{1,k_1},\cdots,y_{q,k_q}))=\alpha_{i_k}(x_{1,k_1},\cdots,x_{q,k_q})\\
		(y_{1,k_1},\cdots,y_{q,k_q}) $ for all $i_k \in \sum_{M_k}$, where $\alpha_{i_k} : I^q \rightarrow \mathbb{R}$ is a continuous map such that $\sup_{(x_{1,k_1},\cdots,x_{q,k_q})\in I^q}\\
		\lvert \alpha_{i_k} \rvert<1$, then we get  $\alpha$-fractal functions  with variable scaling. On the other hand, if we consider $r_{i_1,\cdots,i_q}((x_{1,k_1},\cdots,x_{q,k_q}),(y_{1,k_1},\cdots,y_{q,k_q})) = \alpha_{i_k}(x_{1,k_1},\cdots,x_{q,k_q}) \frac{(y_{1,k_1},\cdots,y_{q,k_q})}{1+(y_{1,k_1},\cdots,y_{q,k_q})}$ or \\
		$\alpha_{i_k}(x_{1,k_1},\cdots,x_{q,k_q})\frac{1}{1+(y_{1,k_1},\cdots,y_{q,k_q})}$ or $\frac{(y_{1,k_1},\cdots,y_{q,k_q})}{1+(y_{1,k_1},\cdots,y_{q,k_q})}$ or $\frac{1}{1+(y_{1,k_1},\cdots,y_{q,k_q})}$, where $\alpha_{i_k} : I^q \rightarrow \mathbb{R}$ is a continuous map such that $\sup_{(x_{1,k_1},\cdots,x_{q,k_q})\in I^q}\lvert \alpha_{i_k} \rvert<1$, then we obtain some new type of multivariate fractal function.
	\end{remark}  
	
	\section{Invariant Measures for Rakotch Contractions}\label{sec5}	
	In this section, we study invariant probability measures associated with Rakotch contractions, as defined in Section~\ref{sec2}. We consider weighted IFSs acting on a complete metric space and analyze the corresponding Markov operators induced by probability. Using the contractive properties of Rakotch mappings, we prove the existence and uniqueness of an invariant Borel probability measure and demonstrate that its support is exactly the attractor of the IFS, i.e., the graph of the corresponding multivariate FIF.
	\begin{theorem}\label{measureRakotch}
		Let $I= \{X=\mathbb{I}^q \times \mathbb{R}, \Omega_{i_1,\cdots,i_q} ; (i_1,\cdots,i_q) \in \prod_{k=1}^{q} \sum_{N_k}\}$ be an IFS and each $\Omega_{i_1,\cdots,i_q}$ is a Rakotch contraction on $(X,D_\theta)$. Let $P_{i_1,\cdots,i_q}$ be a probability vector satisfying $\sum_{i_1=1}^{N_1} \cdots \sum_{i_q=1}^{N_q} P_{i_1,\cdots,i_q} =1$, then there exist a unique Borel probability measure $\mu^* \in \mathcal{P}(X)$ supported on the graph $G$ of IFS $I$ such that 
		\[\mu^* = \sum_{i_1=1}^{N_1} \cdots \sum_{i_q=1}^{N_q} P_{i_1,\cdots,i_q} \mu^* \circ \Omega^{-1}_{i_1,\cdots,i_q} \]
		Moreover, \[\operatorname{supp}(\mu^*) = \mathcal{G}\]
		where $\mathcal{G}$ is the attractor (graph) of the IFS $\mathbb{I}$.
	\end{theorem}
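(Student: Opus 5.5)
The plan is to realize $\mu^*$ as the unique fixed point of the Markov (Hutchinson) operator
\[
M\nu=\sum_{i_1=1}^{N_1}\cdots\sum_{i_q=1}^{N_q}P_{i_1,\cdots,i_q}\,\nu\circ\Omega^{-1}_{i_1,\cdots,i_q}.
\]
We work on a suitable complete space of probability measures carrying the Monge--Kantorovich (Wasserstein-1) metric
\[
d_{MK}(\nu_1,\nu_2)=\sup\Big\{\int g\,d\nu_1-\int g\,d\nu_2:\ \mathrm{Lip}_{\mathcal{D}_\theta}(g)\le 1\Big\}.
\]
The key point is that $M$ inherits a Rakotch-type contractivity from the maps $\Omega_{i_1\cdots i_q}$, so a generalized fixed point theorem applies. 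We do not try for a Banach contraction. By Theorem~\ref{rakotchthoerem}, each $\Omega_{i_1\cdots i_q}$ is a Rakotch contraction on $(X,\mathcal{D}_\theta)$, with a common comparison function. Call it $\varphi(t)=\beta(t)t$, where $\beta$ is the non-increasing function built in that proof and taken as the maximum over all indices. By Remark~\ref{concave} we may replace $\varphi$ by a strictly increasing concave function $\tilde\varphi$ with $\tilde\varphi(t)<t$ and $\tilde\varphi(t)/t$ non-increasing.

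We would restrict to $\mathcal{P}(K_0)$, the Borel probability measures on a compact set $K_0\subset X$ with $\mathcal{W}(K_0)\subset K_0$. Such a set exists because $\mathcal{G}$ is compact and the $\Omega_{i_1\cdots i_q}$ are non-expansive: take a large closed $\mathcal{D}_\theta$-ball around a point of $\mathcal{G}$, which is invariant once its radius $R$ satisfies $\tilde\varphi(R)+\operatorname{diam}\mathcal{G}\le R$. This is possible since $\tilde\varphi(t)/t\le\beta(t_0)<1$ for $t\ge t_0$. Alternatively one may work with $\mathcal{P}_1(X)$ directly. Then $(\mathcal{P}(K_0),d_{MK})$ is compact, hence complete, and $M$ maps it into itself.

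The main step, and the expected obstacle, is showing that $d_{MK}(M\nu_1,M\nu_2)\le\tilde\varphi(d_{MK}(\nu_1,\nu_2))$; test functions $g$ no longer pull back to $1$-Lipschitz functions with a uniform factor. First I would use the coupling (dual) form of $d_{MK}$. Choose an optimal coupling $\pi$ of $\nu_1,\nu_2$ and push it forward by $\Omega_{i_1\cdots i_q}\times\Omega_{i_1\cdots i_q}$, averaging with weights $P_{i_1,\cdots,i_q}$. This gives a coupling of $M\nu_1,M\nu_2$, so
\[
d_{MK}(M\nu_1,M\nu_2)\le\sum P_{i_1,\cdots,i_q}\int\mathcal{D}_\theta(\Omega_{i_1\cdots i_q}x,\Omega_{i_1\cdots i_q}y)\,d\pi\le\int\tilde\varphi(\mathcal{D}_\theta(x,y))\,d\pi.
\]
Concavity and Jensen's inequality (Definition~\ref{jensendefinition}, reversed form) then give the bound $\le\tilde\varphi\big(\int\mathcal{D}_\theta\,d\pi\big)=\tilde\varphi(d_{MK}(\nu_1,\nu_2))$. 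Hence $M$ is a Rakotch (in particular Matkowski) contraction on a complete space. The fixed point theorem underlying Theorem~\ref{attractor} (Rakotch/Matkowski) then yields a unique $\mu^*$ with $M\mu^*=\mu^*$. Uniqueness in all of $\mathcal{P}(X)$ with compact support follows by enlarging $K_0$.

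For the support, let $S=\operatorname{supp}\mu^*$, which is compact. From $\mu^*=M\mu^*$ and $P_{i_1,\cdots,i_q}>0$ (assumed; otherwise we discard null maps), we get $S=\bigcup\overline{\Omega_{i_1\cdots i_q}(S)}=\bigcup\Omega_{i_1\cdots i_q}(S)$. The closure is unnecessary since the maps are continuous and $S$ is compact. So $S$ is a nonempty compact fixed set of $\mathcal{W}$, and by the uniqueness in Theorem~\ref{attractor} we get $S=\mathcal{G}$. By Theorem~\ref{rakotchthoerem}, $\mathcal{G}$ is the graph of $f^*$.
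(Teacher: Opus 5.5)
Your proposal is correct and follows essentially the paper's route: the Markov operator on the Monge--Kantorovich space, the pushed-forward optimal coupling, Jensen's inequality for a concave comparison function, the Rakotch/Matkowski fixed point theorem, and identifying $\operatorname{supp}(\mu^*)$ as a nonempty compact invariant set, hence equal to $\mathcal{G}$. Restricting to $\mathcal{P}(K_0)$ for an invariant compact ball $K_0$ is a real improvement, because the paper's assertion that compactly supported measures on the non-compact $X$ form a complete space under $d_{MK}$ is false; note, however, that $P_{i_1,\cdots,i_q}>0$ must be a standing hypothesis (the paper uses it tacitly too), since discarding null maps would yield the attractor of a sub-IFS rather than $\mathcal{G}$.
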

	
	\begin{proof}
		For each $(i_1,\cdots,i_q) \in \prod_{k=1}^{q} \sum_{N_k}$. The map $\Omega_{i_1,\cdots,i_q} : X \rightarrow X$ is a Rakotch contraction, then by \cite[Theorem $1$]{jachymski2007nonlinear}, there exists a strictly increasing, concave function, $\phi:\mathbb{R}_+\to\mathbb{R}_+$ such that
		\[ D_\theta\big(\Omega_{i_1,\cdots,i_q}(u), \Omega_{i_1,\cdots,i_q}(v)\big) \leq \phi(D_\theta(u,v)) ~~ \forall ~~ u,v \in X.\] 	
		Let $\mathcal{P}(X)$ denote the space of all Borel probability measures on $X$	with compact support. Now, define Monge Kantorovich metric on $\mathcal{P}(X)$
		\[
		d_{MK}(\mu,\nu) =\sup \left\{\int_X f \, d\mu - \int_X f \, d\nu:\;f : X \to \mathbb{R},\;\mathrm{Lip}_{D_\theta}(f) \le 1\right\},
		\]
		where $ \mu,\nu \in \mathcal{P}(X)$. Then, $(\mathcal{P}(X), d_{MK})$ is a complete metric space. By \cite[Theorem $8.10.45$]{bogachev2007measure}, $d_{MK}(\mu,\nu)$ can be represented in the following form
		\begin{equation}\label{markovother}
			d_{MK}(\mu,\nu)= \inf_{\lambda \in L(\mu,\nu)}
			\int_{X \times X} D_\theta (u,v)\, d\lambda(u,v).
		\end{equation}
		Moreover, there exists a $\lambda_0 \in  L(\mu,\nu)$ such that the infimum in \eqref{markovother} is attained. Therefore, 
		\[d_{MK}(\mu,\nu)= \int_{X \times X} D_\theta(u,v)\, d\lambda_0(u,v),\]
		where, $ L(\mu,\nu)$ denotes the space of all Borel probability measures $\lambda$ on $X \times X$ such that the projection of $\lambda$ on the first coordinate is $\mu$ and on the second coordinate is $\nu$.
		Now, we define a Markov Operator $ \mathbb{M}: \mathcal{P}(X) \rightarrow \mathcal{P}(X)$ as follows
		\[ \mathbb{M}(\mu) = \sum_{i_1=1}^{N_1} \cdots \sum_{i_q=1}^{N_q} P_{i_1,\cdots, i_q} \mu\circ \Omega^{-1}_{i_1,\cdots, i_q}.\]
		It is not difficult to show that $\mathbb{M}$ is well-defined. We aim to show that $\mathbb{M}$ is a Rakotch contraction map. We define $\lambda_1 = \sum_{i_1=1}^{N_1} \cdots \sum_{i_q=1}^{N_q} P_{i_1,\cdots, i_q} \lambda_0( \Omega^{-1}_{i_1,\cdots, i_q} \times \Omega^{-1}_{i_1,\cdots, i_q})$. 
		Let $A$ be any Borel subset of $X$. Then, we have 
		\begin{align*}
			\lambda_1(X,A) =& \sum_{i_1=1}^{N_1} \cdots \sum_{i_q=1}^{N_q} P_{i_1,\cdots, i_q} \lambda_0( \Omega^{-1}_{i_1,\cdots, i_q} \times \Omega^{-1}_{i_1,\cdots, i_q}) (X,A)\\
			=& \sum_{i_1=1}^{N_1} \cdots \sum_{i_q=1}^{N_q} P_{i_1,\cdots, i_q} \lambda_0( \Omega^{-1}_{i_1,\cdots, i_q} (X) \times \Omega^{-1}_{i_1,\cdots, i_q}(A))\\
			=& \sum_{i_1=1}^{N_1} \cdots \sum_{i_q=1}^{N_q} P_{i_1,\cdots, i_q} \lambda_0( X , \Omega^{-1}_{i_1,\cdots, i_q}(A))
		\end{align*}
		Since $\lambda_0 \in L(\mu,\nu)$, we have 
		\[	\lambda_1(X,A) = \sum_{i_1=1}^{N_1} \cdots \sum_{i_q=1}^{N_q} P_{i_1,\cdots, i_q} \nu ( \Omega^{-1}_{i_1,\cdots, i_q} (A)) = (\mathbb{M}(\nu))(A).
		\]
		Similarly, $\lambda_1(A,X)=(\mathbb{M}(\mu))(A) $. Therefore, $\lambda_1 \in L(\mathbb{M}(\mu),\mathbb{M}(\nu)).$\\
		We have
		\begin{align*}
			d_{MK}(\mathbb{M}(\mu),\mathbb{M}(\nu)) \leq& \int_{X \times X} D_\theta (u,v) d\lambda_1 (u,v)\\
			=& \sum_{i_1=1}^{N_1} \cdots \sum_{i_q=1}^{N_q} P_{i_1,\cdots, i_q} \int_{X \times X} D_\theta( \Omega_{i_1,\cdots, i_q} (u) , \Omega_{i_1,\cdots, i_q}(v))d\lambda_0 (u,v).
		\end{align*}
		Since, each $\Omega_{i_1,\cdots, i_q}$ is a Rakotch contraction.	
		Therefore, 
		\[ d_{MK}(\mathbb{M}(\mu),\mathbb{M}(\nu)) \leq  \sum_{i_1=1}^{N_1} \cdots \sum_{i_q=1}^{N_q} P_{i_1,\cdots, i_q} \int_{X \times X}\phi(D_\theta(u,v))d\lambda_0 (u,v).
		\] 
		Since $\phi$ is concave and non-decreasing, then by using definition \ref{jensendefinition}, we have
		\begin{align*}
			\int_{X \times X}\phi(D_\theta(u,v))d\lambda_0 (\mu,\nu) \leq&~ \phi\left(\int_{X \times X} D_\theta(u,v)d\lambda_0 (u,v)\right)\\
			\leq&~ \phi(d_{MK}(\mu,\nu))
		\end{align*}
		Therefore, 
		\[d_{MK}(\mathbb{M}(\mu),\mathbb{M}(\nu)) \leq \sum_{i_1=1}^{N_1} \cdots \sum_{i_q=1}^{N_q} P_{i_1,\cdots, i_q} \phi(d_{MK}(\mu,\nu)) = \boldsymbol{\Phi}(d_{MK}(\mu,\nu)), \]
		where $\boldsymbol{\Phi} : \mathbb{R}^+ \to \mathbb{R}^+$ is defined by
		\[\boldsymbol{\Phi(t)} = \sum_{i_1=1}^{N_1} \cdots \sum_{i_q=1}^{N_q}P_{i_1,\ldots,i_q}\,\phi(t).\]
		By the definition of $\phi$ and the normalization condition
		\[\sum_{i_1=1}^{N_1} \cdots \sum_{i_q=1}^{N_q}P_{i_1,\ldots,i_q} = 1,\] we get that $\boldsymbol{\Phi}$ is strictly increasing and concave. Hence, the Markov operator $\mathbb{M}$ is a Rakotch contraction on the metric space $(\mathcal{P}(X), d_{MK})$. By the Rakotch fixed point theorem, $\mathbb{M}$ has unique fixed point $\mu^* \in \mathcal{P}(X)$, i.e., $\mathbb{M}(\mu^*)=\mu^*$. Thus we have 
		\[\mu^* = \sum_{i_1=1}^{N_1} \cdots \sum_{i_q=1}^{N_q} P_{i_1,\cdots,i_q} \mu^* \circ \Omega^{-1}_{i_1,\cdots,i_q}.\]
		Now, we show that $\operatorname{supp}(\mu^*)= G $. To show this, it is enough to show that $\operatorname{supp}(\mu^*)$ is the attractor of the IFS $\mathbb{I}$. Firstly, we show that $\operatorname{supp}(\mu^*) \subseteq  \bigcup_{i_1=1}^{N_1}\cdots \bigcup_{i_q=1}^{N_q} \Omega_{i_1\cdots i_q}\operatorname{supp}(\mu^*).$
		\begin{align*}
			\mu^*\left( \bigcup_{i_1=1}^{N_1}\cdots \bigcup_{i_q=1}^{N_q} \Omega_{i_1\cdots i_q}\operatorname{supp}(\mu^*)\right) =&  \sum_{i_1=1}^{N_1} \cdots \sum_{i_q=1}^{N_q} P_{i_1,\cdots,i_q} \mu^*\left(\Omega^{-1}_{i_1\cdots i_q}(\bigcup_{i_1=1}^{N_1}\cdots \bigcup_{i_q=1}^{N_q} \Omega_{i_1\cdots i_q}(\operatorname{supp}(\mu^*)))\right)\\
			\geq& \sum_{i_1=1}^{N_1} \cdots \sum_{i_q=1}^{N_q} P_{i_1,\cdots,i_q} \mu^*\left(\Omega^{-1}_{i_1\cdots i_q}( \Omega_{i_1\cdots i_q}(\operatorname{supp}(\mu^*)))\right)
		\end{align*}
		Since $\sum_{i_1=1}^{N_1} \cdots \sum_{i_q=1}^{N_q} P_{i_1,\cdots, i_q}=1$ and $\mu^*(\operatorname{supp}(\mu^*)) = 1$, we have
		\[	\mu^*\left( \bigcup_{i_1=1}^{N_1}\cdots \bigcup_{i_q=1}^{N_q} \Omega_{i_1\cdots i_q}\operatorname{supp}(\mu^*)\right) = 1.\]
		Therefore, $\operatorname{supp}(\mu^*) \subseteq  \bigcup_{i_1=1}^{N_1}\cdots \bigcup_{i_q=1}^{N_q} \Omega_{i_1\cdots i_q}\operatorname{supp}(\mu^*).$\\
		Now we prove that $\bigcup_{i_1=1}^{N_1}\cdots \bigcup_{i_q=1}^{N_q} \Omega_{i_1\cdots i_q}\operatorname{supp}(\mu^*) \subseteq \operatorname{supp}(\mu^*)$. we have 
		\begin{align*}
			1= \sum_{i_1=1}^{N_1} \cdots \sum_{i_q=1}^{N_q} P_{i_1,\cdots, i_q} \geq \sum_{i_1=1}^{N_1} \cdots \sum_{i_q=1}^{N_q} P_{i_1,\cdots,i_q} \mu^*\left(\Omega^{-1}_{i_1\cdots i_q}(\operatorname{supp}(\mu^*))\right)= \mu^*(\operatorname{supp}(\mu^*))=1
		\end{align*}
		Since $\sum_{i_1=1}^{N_1} \cdots \sum_{i_q=1}^{N_q} P_{i_1,\cdots, i_q}=1$ and $\sum_{i_1=1}^{N_1} \cdots \sum_{i_q=1}^{N_q} P_{i_1,\cdots,i_q} \mu^*\left(\Omega^{-1}_{i_1\cdots i_q}(\operatorname{supp}(\mu^*))\right)=1,$ we have $\mu^*\left(\Omega^{-1}_{i_1\cdots i_q}(\operatorname{supp}(\mu^*))\right)=1$ for each $(i_1,\cdots, i_q)$. \\
		Therefore, for every $(i_1,\cdots, i_q)$, $\operatorname{supp}(\mu^*) \subseteq \Omega^{-1}_{i_1\cdots i_q}(\operatorname{supp}(\mu^*)).$ This implies that $\Omega_{i_1\cdots i_q}\operatorname{supp}(\mu^*) \subseteq \operatorname{supp}(\mu^*).$ This yields that 
		$\bigcup_{i_1=1}^{N_1}\cdots \bigcup_{i_q=1}^{N_q} \Omega_{i_1\cdots i_q}\operatorname{supp}(\mu^*) \subseteq \operatorname{supp}(\mu^*).$ Thus, we get 
		\[\operatorname{supp}(\mu^*) = \bigcup_{i_1=1}^{N_1}\cdots \bigcup_{i_q=1}^{N_q} \Omega_{i_1\cdots i_q}\operatorname{supp}(\mu^*).  \]
		Since the IFS $\mathbb{I}$ possesses a unique attractor $\mathcal{G}$, this implies that $\operatorname{supp}(\mu^*)$ is the attractor of the IFS $\mathbb{I}$. we conclude that $\operatorname{supp}(\mu^*) = \mathcal{G}$. This completes the proof.

	\end{proof}	
	\begin{remark}
		The proof for the above Theorem \eqref{measureRakotch} is based on the $\phi$ function, which is strictly increasing and concave. Consequently, it remains an open problem whether the existence and uniqueness of an invariant probability measure can be established for IFSs generated by Matkowski contractions.
	\end{remark}
	
	\section{Conclusion and Future directions}\label{sec3}
	In this paper, we have developed the concept of nonlinear multivariate FIFs. To construct the nonlinear multivariate FIF, we have used the Matkowski and the Rakotch contractions, a more general contraction, instead of the Banach contraction, which is a particular case of the Matkowski and the Rakotch contractions.
	We have proposed a method to construct nonlinear multivariate IFSs using Matkowski and Rakotch contractions. Further, we have proved that the attractors of nonlinear IFS constructed using Matkowski (Theorem \ref{Matkowski}) and Rakotch (Theorem \ref{rakotchthoerem}) contractions are graphs of some continuous functions (Lemma \ref{operator}) which interpolate the given data. Since Banach contraction is a special case of Matkowski and Rakotch contraction, it is easy to observe that our method is a more generalized version of the so far published methods for constructing the multivariate FIFs. Furthermore, we extended the proposed framework to construct $\alpha$-fractal functions associated with multivariate continuous functions by employing both Matkowski (Theorem \ref{alphaMatkowski}) and Rakotch (Theorem \ref{alphaRakotch}) contractions. We proved the existence and uniqueness of such $\alpha$-fractal functions under these generalized contractive conditions, thereby broadening the scope of $\alpha$-fractal function theory beyond the classical Banach setting.
	
	In addition, we investigate invariant probability measures associated with nonlinear multivariate iterated function systems generated by Rakotch contractions (Theorem \ref{measureRakotch}) and establish the existence and uniqueness of invariant measures supported on the graph of the multivariate fractal interpolation function.
	
	Further, we have worked on constructing multivariate FIFs using the Matkowski and the Rakotch contractions.  Numerous results have been given regarding the fractal dimension of Barnsley's type FIFs \cite{jha2021dimensional,pandey2022fractal}. In the future, one may take up to generalize the concept of fractal dimension for this new FIF and work on the fractal dimensional aspects of such fractal functions.
	
	Further, fractional calculus has been explored for Barnsley's type FIF. See, for instance, \cite{liang2010box,chandra2021calculus}. One may also attempt to study the fractional calculus of this new FIF.

	\section*{Declaration}
	
	\textbf{Conflicts of interest.} We do not have any conflict of interest.\\
	\\
	\noindent
	\textbf{Data availability:} No data were used to support this study.\\
	\\
	\noindent
	\textbf{Code availability:} Not applicable\\
	\\
	\noindent
	\textbf{Authors' contributions:} Each author contributed equally to this manuscript.
	
	\section*{Acknowledgements}
	This work is supported by a Postdoctoral fellowship by Northwest University, Xi'an, Shaanxi Province, China, to the first author. Further, supported by UGC $(231620024614)$ to the second author.

	\bibliographystyle{spmpsci} 
	\bibliography{Reference}
	
\end{document}